\documentclass[12pt,reqno]{amsart}
\usepackage{etex,mathtools}
\usepackage{mathdots,longtable}
\usepackage{amscd,amssymb,amsmath,multicol,tikz,float,mathrsfs}
\usepackage[left=1in,right=1in]{geometry}
%\usepackage[all]{xy}
%\input pictex.tex
%\restylefloat{table}
\begin{document}

\restylefloat{table}
\newtheorem{thm}[equation]{Theorem}
\numberwithin{equation}{section}
\newtheorem{cor}[equation]{Corollary}
\newtheorem{expl}[equation]{Example}
\newtheorem{rmk}[equation]{Remark}
\newtheorem{conv}[equation]{Convention}
\newtheorem{claim}[equation]{Claim}
\newtheorem{lem}[equation]{Lemma}
\newtheorem{sublem}[equation]{Sublemma}
\newtheorem{conj}[equation]{Conjecture}
\newtheorem{defin}[equation]{Definition}
\newtheorem{diag}[equation]{Diagram}
\newtheorem{prop}[equation]{Proposition}
\newtheorem{notation}[equation]{Notation}
\newtheorem{tab}[equation]{Table}
\newtheorem{fig}[equation]{Figure}
\newcounter{bean}
\renewcommand{\theequation}{\thesection.\arabic{equation}}

\raggedbottom \voffset=-.7truein \hoffset=0truein \vsize=8truein
\hsize=6truein \textheight=8truein \textwidth=6truein
\baselineskip=18truept
\def\mapleft#1{\smash{\mathop{\longleftarrow}\limits^{#1}}}
\def\mapright#1{\ \smash{\mathop{\longrightarrow}\limits^{#1}}\ }
\def\ml#1{\,\smash{\mathop{\leftarrow}\limits^{#1}}\,}
\def\mapup#1{\Big\uparrow\rlap{$\vcenter {\hbox {$#1$}}$}}
\def\mapdown#1{\Big\downarrow\rlap{$\vcenter {\hbox {$\ssize{#1}$}}$}}
\def\mapne#1{\nearrow\rlap{$\vcenter {\hbox {$#1$}}$}}
\def\mapse#1{\searrow\rlap{$\vcenter {\hbox {$\ssize{#1}$}}$}}
\def\mapr#1{\smash{\mathop{\rightarrow}\limits^{#1}}}
\def\Mt{\widetilde{\M}}
\def\ss{\smallskip}
\def\s{\sigma}
\def\Bt{\widetilde{\mathcal{B}}}
\def\l{\lambda}
\def\Ah{\widehat{A}}
\def\Bh{\widehat{B}}
\def\vp{v_1^{-1}\pi}
\def\at{{\widetilde\alpha}}
\def\At{\widetilde{\mathcal{A}}}
\def\as{\mathscr{A}}
\def\Ast{\widetilde{\as}}
\def\Mct{\widetilde{\mathcal{M}}}
\def\sm{\wedge}
\def\la{\langle}
\def\ra{\rangle}
\def\lar{\leftarrow}
\def\ev{\text{ev}}
\def\od{\text{od}}
\def\on{\operatorname}
\def\ol#1{\overline{#1}{}}
\def\spin{\on{Spin}}
\def\cat{\on{cat}}
\def\Lbar{\overline{\Lambda}}
\def\qed{\quad\rule{8pt}{8pt}\bigskip}
\def\ssize{\scriptstyle}
\def\a{\alpha}
\def\bz{{\Bbb Z}}
\def\Rhat{\hat{R}}
\def\im{\on{im}}
\def\ct{\widetilde{C}}
\def\ext{\on{Ext}}
\def\sq{\on{Sq}}
\def\eps{\epsilon}
\def\ar#1{\stackrel {#1}{\rightarrow}}
\def\br{{\bold R}}
\def\bC{{\bold C}}
\def\bA{{\bold A}}
\def\bB{{\bold B}}
\def\bD{{\bold D}}
\def\bC{{\bold C}}
\def\bh{{\bold H}}
\def\bQ{{\bold Q}}
\def\bP{{\bold P}}
\def\bx{{\bold x}}
\def\bo{{\bold{bo}}}
\def\dh{\widehat{d}}
\def\A{\mathcal{A}}
\def\B{\mathcal{B}}
\def\si{\sigma}
\def\Vbar{{\overline V}}
\def\dbar{{\overline d}}
\def\wbar{{\overline w}}
\def\Sum{\sum}
\def\tfrac{\textstyle\frac}

\def\tb{\textstyle\binom}
\def\Si{\Sigma}
\def\w{\wedge}
\def\equ{\begin{equation}}
\def\b{\beta}
\def\G{\Gamma}
\def\L{\Lambda}
\def\g{\gamma}
\def\d{\delta}
\def\k{\kappa}
\def\psit{\widetilde{\Psi}}
\def\tht{\widetilde{\Theta}}
\def\psiu{{\underline{\Psi}}}
\def\thu{{\underline{\Theta}}}
\def\aee{A_{\text{ee}}}
\def\aeo{A_{\text{eo}}}
\def\aoo{A_{\text{oo}}}
\def\aoe{A_{\text{oe}}}
\def\vbar{{\overline v}}
\def\endeq{\end{equation}}
\def\xhat{\widehat{x}}
\def\sn{S^{2n+1}}
\def\zp{\bold Z_p}
\def\cR{{\mathcal R}}
\def\P{{\mathcal P}}
\def\cQ{{\mathcal Q}}
\def\cj{{\cal J}}
\def\zt{{\bold Z}_2}
\def\bs{{\bold s}}
\def\bof{{\bold f}}
\def\bq{{\bold Q}}
\def\be{{\bold e}}
\def\Hom{\on{Hom}}
\def\ker{\on{ker}}
\def\kot{\widetilde{KO}}
\def\coker{\on{coker}}
\def\da{\downarrow}
\def\colim{\operatornamewithlimits{colim}}
\def\zphat{\bz_2^\wedge}
\def\io{\iota}
\def\om{\omega}
\def\Prod{\prod}
\def\e{{\cal E}}
\def\zlt{\Z_{(2)}}
\def\exp{\on{exp}}
\def\abar{{\overline a}}
\def\xbar{{\overline x}}
\def\ybar{{\overline y}}
\def\zbar{{\overline z}}
\def\mbar{{\overline m}}
\def\nbar{{\overline n}}
\def\sbar{{\overline s}}
\def\kbar{{\overline k}}
\def\bbar{{\overline b}}
\def\et{{\widetilde E}}
\def\ni{\noindent}
\def\tsum{\textstyle \sum}
\def\coef{\on{coef}}
\def\den{\on{den}}
\def\lcm{\on{l.c.m.}}
\def\Ext{\operatorname{Ext}}
\def\iso{\approx}
\def\lra{\longrightarrow}
\def\vi{v_1^{-1}}
\def\ot{\otimes}
\def\psibar{{\overline\psi}}
\def\thbar{{\overline\theta}}
\def\Mh{{\widehat M}}
\def\exc{\on{exc}}
\def\ms{\medskip}
\def\ehat{{\hat e}}
\def\etao{{\eta_{\text{od}}}}
\def\etae{{\eta_{\text{ev}}}}
\def\dirlim{\operatornamewithlimits{dirlim}}
\def\gt{\widetilde{L}}
\def\lt{\widetilde{\lambda}}
\def\st{\widetilde{s}}
\def\ft{\widetilde{f}}
\def\sgd{\on{sgd}}
\def\lfl{\lfloor}
\def\rfl{\rfloor}
\def\ord{\on{ord}}
\def\gd{{\on{gd}}}
\def\rk{{{\on{rk}}_2}}
\def\nbar{{\overline{n}}}
\def\MC{\on{MC}}
\def\lg{{\on{lg}}}
\def\cH{\mathcal{H}}
\def\cS{\mathcal{S}}
\def\cP{\mathcal{P}}
\def\N{{\Bbb N}}
\def\Z{{\Bbb Z}}
\def\Q{{\Bbb Q}}
\def\R{{\Bbb R}}
\def\C{{\Bbb C}}
\def\Lb{\overline\Lambda}
\def\mo{\on{mod}}
\def\xt{\times}
\def\notimm{\not\subseteq}
\def\Remark{\noindent{\it  Remark}}
\def\kut{\widetilde{KU}}
\def\Eb{\overline E}
\def\*#1{\mathbf{#1}}
\def\0{$\*0$}
\def\1{$\*1$}
\def\22{$(\*2,\*2)$}
\def\33{$(\*3,\*3)$}
\def\ss{\smallskip}
\def\ssum{\sum\limits}
\def\dsum{\displaystyle\sum}
\def\la{\langle}
\def\ra{\rangle}
\def\on{\operatorname}
\def\proj{\on{proj}}
\def\od{\text{od}}
\def\ev{\text{ev}}
\def\o{\on{o}}
\def\U{\on{U}}
\def\lg{\on{lg}}
\def\a{\alpha}
\def\bz{{\Bbb Z}}
\def\ccM{{\Bbb M}}
\def\E{\mathcal{E}}
\def\eps{\varepsilon}
\def\bc{{\bold C}}
\def\bN{{\bold N}}
\def\bB{{\bold B}}
\def\bW{{\bold W}}
\def\nut{\widetilde{\nu}}
\def\tfrac{\textstyle\frac}
\def\b{\beta}
\def\G{\Gamma}
\def\g{\gamma}
\def\zt{{\Bbb Z}_2}
\def\zth{{\bold Z}_2^\wedge}
\def\bs{{\bold s}}
\def\bx{{\bold x}}
\def\bof{{\bold f}}
\def\bq{{\bold Q}}
\def\be{{\bold e}}
\def\lline{\rule{.6in}{.6pt}}
\def\xb{{\overline x}}
\def\xbar{{\overline x}}
\def\ybar{{\overline y}}
\def\zbar{{\overline z}}
\def\ebar{{\overline e}}
\def\nbar{{\overline n}}
\def\ubar{{\overline u}}
\def\bbar{{\overline b}}
\def\et{{\widetilde e}}
\def\M{\mathcal{M}}
\def\lf{\lfloor}
\def\rf{\rfloor}
\def\ni{\noindent}
\def\ms{\medskip}
\def\Dhat{{\widehat D}}
\def\what{{\widehat w}}
\def\Yhat{{\widehat Y}}
\def\abar{{\overline{a}}}
\def\minp{\min\nolimits'}
\def\sb{{$\ssize\bullet$}}
\def\mul{\on{mul}}
\def\N{{\Bbb N}}
\def\Z{{\Bbb Z}}
\def\Q{{\Bbb Q}}
\def\R{{\Bbb R}}
\def\C{{\Bbb C}}
\def\Xb{\overline{X}}
\def\eb{\overline{e}}
\def\notint{\cancel\cap}
\def\cS{\mathcal S}
\def\cR{\mathcal R}
\def\el{\ell}
\def\TC{\on{TC}}
\def\GC{\on{GC}}
\def\wgt{\on{wgt}}
\def\Ht{\widetilde{H}}
\def\wbar{\overline w}
\def\dstyle{\displaystyle}
\def\Sq{\on{sq}}
\def\Om{\Omega}
\def\ds{\dstyle}
\def\tz{tikzpicture}
\def\zcl{\on{zcl}}
\def\bd{\bold{d}}
\def\cM{\mathcal{M}}
\def\io{\iota}
\def\Vb#1{{\overline{V_{#1}}}}
\def\Ebar{\overline{E}}
\def\lb{\,\begin{picture}(-1,1)(1,-1)\circle*{3.5}\end{picture}\ }
\def\rlb{\,\begin{picture}(-1,1)(1,-1) \circle*{4.5}\end{picture}\ }
\def\lbb{\,\begin{picture}(-1,1)(1,-1)\circle*{8}\end{picture}\ }
\def\zp{\Z_p}
\def\lbr{\,\begin{picture}(-1,1)(1,-1)[dashed]\circle*{3.5}\end{picture}\ }
\def\llb{\,\begin{picture}(-1,1)(1,-1)\circle*{2.6}\end{picture}\ }
\def\blb{\,\begin{picture}(-1,1)(1,-1) \circle*{5.8}\end{picture}\ }
\def\alh{\widehat{\a}}
\setcounter{MaxMatrixCols}{15}
\title
{Orientable manifolds with nonzero dual Stiefel-Whitney classes of largest possible grading}
\author{Donald M. Davis}
\address{Department of Mathematics, Lehigh University\\Bethlehem, PA 18015, USA}
\email{dmd1@lehigh.edu}
%\author{W. Stephen Wilson}
%\address{Department of Mathematics, Johns Hopkins University\\Baltimore, MD 01220, USA}
%\email{wswilsonmath@gmail.com}
\date{July 29, 2025}
\begin{abstract} It is known that, for all $n$, there exist compact differentiable orientable $n$-manifolds with dual Stiefel-Whitney class $\wbar_{n-\alh(n)}\ne0$, and this is best possible, but the proof is nonconstructive. Here $\alh(n)$ equals the number of 1's in the binary expansion of $n$ if $n\equiv1$ mod 4, and exceeds this by 1 otherwise. We find, for all $n\not\equiv0$  mod 4, examples of real Bott maanifolds with this property.\end{abstract}
\keywords{dual Stiefel-Whitney classes, real Bott manifolds}
\thanks {2000 {\it Mathematics Subject Classification}: 57R20, 57R19, 55S10.}
\maketitle
\section{Introduction}\label{intro}
The dual Stiefel-Whitney classes $\wbar_i(M)$ of a compact differentiable\footnote{All manifolds considered in this paper will be compact and differentiable, and we will henceforth use the word ``manifold'' to mean ``compact differentiable manifold.'' All cohomology groups have coefficients in $\zt=\Z/2$.} $n$-manifold $M$ are elements of $H^i(M;\zt)$ which have the property that if $\wbar_i(M)\ne0$, then $M$ cannot be embedded in $\R^{n+i}$ nor immersed in $\R^{n+i-1}$.
An important early result in algebraic topology was the following theorem of Massey.
\begin{thm} \label{Masthm} $($\cite{Mas}$)$ Let $\a(n)$ denote the number of $1$'s in the binary expansion of $n$. If $M$ is an  $n$-manifold and $i>n-\a(n)$, then $\wbar_i(M)=0$.
Moreover, there exists an $n$-manifold $M$ with $\wbar_{n-\a(n)}(M)\ne0$.
\end{thm}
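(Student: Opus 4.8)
The plan is to prove the two assertions separately, beginning with the vanishing statement, for which I would build on Wu's theorem: $w(M)=\on{Sq}(v)$, where $\on{Sq}=\sum_j\on{Sq}^j$ is the total Steenrod square and $v=1+v_1+\cdots$ is the total Wu class of $M$, characterized by $\langle\on{Sq}(z),[M]\rangle=\langle v\cup z,[M]\rangle$ for all $z\in H^*(M;\zt)$. Since $\on{Sq}$ is a ring endomorphism of the finite-dimensional ring $H^*(M;\zt)$ with $\on{Sq}^0=\mathrm{id}$, it is invertible; write $\overline{\on{Sq}}$ for its inverse, whose degree-$j$ component is $\chi(\on{Sq}^j)$ with $\chi$ the antipode of the Steenrod algebra. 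Multiplicativity of $\on{Sq}$ gives $\wbar(M)=w(M)^{-1}=\on{Sq}(v^{-1})$. Then, for $c\in H^{n-i}(M)$, I would write $c=\on{Sq}(\overline{\on{Sq}}(c))$ and use multiplicativity of $\on{Sq}$ together with the defining property of $v$ to compute
\[
\langle\wbar_i(M)\cup c,[M]\rangle\;=\;\langle\on{Sq}\bigl(v^{-1}\cdot\overline{\on{Sq}}(c)\bigr),[M]\rangle\;=\;\langle v\cdot v^{-1}\cdot\overline{\on{Sq}}(c),[M]\rangle\;=\;\langle\overline{\on{Sq}}^{i}(c),[M]\rangle,
\]
the last step isolating the degree-$n$ component of $\overline{\on{Sq}}(c)$. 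By Poincar\'e duality it then suffices to show that the operation $\overline{\on{Sq}}^{i}=\chi(\on{Sq}^i)$ annihilates $H^{n-i}$ of every space whenever $i>n-\a(n)$.

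\emph{The combinatorial heart.} Since $\chi(\on{Sq}^i)$ is a $\zt$-combination of admissible monomials $\on{Sq}^I=\on{Sq}^{i_1}\cdots\on{Sq}^{i_r}$ of degree $i$, and $\on{Sq}^I(\iota_{n-i})=0$ in $H^*(K(\zt,n-i))$ as soon as the excess $e(I)=2i_1-i$ exceeds $n-i$, I only need: every admissible $I$ with $|I|=i$ has $e(I)>n-i$ when $i>n-\a(n)$. This is where $\a$ appears. Admissibility forces $i_{k+1}\le\lfloor i_k/2\rfloor$, hence $i_2+\cdots+i_r\le\sum_{j\ge1}\lfloor i_1/2^j\rfloor=i_1-\a(i_1)$ by Legendre's formula for $\nu_2(i_1!)$, so $i\le 2i_1-\a(i_1)$, i.e.\ $e(I)\ge\a(i_1)=\a(i+e(I))$. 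If we also had $e(I)\le n-i$, then subadditivity of $\a$ together with $\a(k)\le k$ would give $\a(n)\le\a(i+e(I))+\a(n-i-e(I))\le e(I)+(n-i-e(I))=n-i$, contradicting $i>n-\a(n)$.

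\emph{Existence.} Write $n=2^{a_1}+\cdots+2^{a_r}$ in binary, so $r=\a(n)$, and take $M=\R P^{2^{a_1}}\times\cdots\times\R P^{2^{a_r}}$, a closed $n$-manifold. In $H^*(\R P^{2^a};\zt)=\zt[x]/(x^{2^a+1})$ we have $(1+x)^{2^{a+1}}=1$, so $\wbar(\R P^{2^a})=(1+x)^{-(2^a+1)}=(1+x)^{2^a-1}=1+x+\cdots+x^{2^a-1}$, whose top term $x^{2^a-1}$ is nonzero. Since the total dual class is multiplicative for products, the top-degree component of $\wbar(M)$ lies in degree $\sum_j(2^{a_j}-1)=n-r=n-\a(n)$ and equals $\prod_j x_j^{2^{a_j}-1}\ne0$; hence $\wbar_{n-\a(n)}(M)\ne0$, and by the vanishing half this is the largest nonzero grading for $M$.

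\emph{Main obstacle.} The one substantive step is the identity $\langle\wbar_i(M)\cup c,[M]\rangle=\langle\overline{\on{Sq}}^{i}(c),[M]\rangle$: recognizing that $\wbar(M)$ is the total square of the inverse Wu class converts the whole problem into the vanishing of a Steenrod operation in a fixed range of degrees, and the particular number $\a(n)$ then falls out purely from Legendre's formula bounding the excess of an admissible Steenrod monomial. The existence half is afterwards a direct computation with projective spaces.
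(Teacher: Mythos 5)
Your argument is correct. Note that the paper does not prove this statement at all --- it is quoted from Massey's paper [Mas] --- so there is no in-text proof to compare against; the only fragment the paper records is exactly your existence example $\prod RP^{2^{e_i}}$. Your vanishing argument is essentially Massey's classical one: reduce $\wbar_i$ via the Wu formula to showing $\chi(\sq^i)$ kills all of $H^{n-i}$, then bound the excess of admissible monomials using $\sum_{j\ge1}\lfloor i_1/2^j\rfloor=i_1-\a(i_1)$ and subadditivity of $\a$. It is worth observing that the paper's Section 2 performs the same first reduction by a different route --- $S$-duality with the Thom space of the stable normal bundle, so that $\wbar_{n-\a(n)}\ne0$ becomes $\chi\sq^{n-\a(n)}$ acting nontrivially into the top cell --- and then works in the Milnor basis (where the relevant operations vanish on classes of degree less than $\sum t_i$) rather than the admissible basis (where they vanish by excess). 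The Wu-class formulation you use has the small advantage of requiring no Thom-space machinery, while the paper's $S$-dual formulation is the one that sets up its later computations, since detecting $\chi\sq^{n-\a(n)}$ on an explicit low-dimensional class is exactly what its main proof does. All the individual steps you give --- invertibility of the total square on a finite-dimensional ring, $e(I)=2i_1-i\ge\a(i_1)=\a(i+e(I))$, the instability bound, and the computation $\wbar(RP^{2^a})=(1+x)^{2^a-1}$ --- check out.
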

This theorem was the first step toward Cohen's theorem (\cite{C}) that every $n$-manifold can be immersed in $\R^{2n-\a(n)}$.
One attractive feature of this theorem is that the existence part is realized by a well-known manifold: if $n=2^{e_1}+\cdots+2^{e_{\a(n)}}$ with $e_i$ distinct,  then $M=\prod RP^{2^{e_i}}$ is an $n$-manifold with $\wbar_{n-\a(n)}(M)\ne0$.

In \cite{DW}, the following analogue for orientable manifolds was proved.
\begin{thm}\label{DWthm} $($\cite[Theorem 1.1]{DW}$)$ Let
$$\alh(n)=\begin{cases}\a(n)&n\equiv1\ (4)\\
\a(n)+1&n\not\equiv1\ (4).\end{cases}.$$
If $M$ is an orientable $n$-manifold and $i>n-\alh(n)$, then $\wbar_i(M)=0$.
Moreover, there exists an orientable $n$-manifold $M$ with $\wbar_{n-\alh(n)}(M)\ne0$.\end{thm}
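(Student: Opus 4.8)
\medskip

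\noindent\emph{Proof plan.}
The plan is to prove the two halves of Theorem~\ref{DWthm} separately: the vanishing half as a refinement of Massey's argument (Theorem~\ref{Masthm}) that brings orientability into play, and the existence half by a nonconstructive realization argument. The common core of both is a mod $2$ binomial/Steenrod-algebra computation indexed by the binary expansion of $n$.

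For the vanishing half, I would work with Wu's formula $w(M)=\sq(v(M))$, where $v(M)=1+v_1+v_2+\cdots$ is the total Wu class, subject to $v_j(M)=0$ for $j>n/2$ and $v_1(M)=w_1(M)$. Since the total square $\sq=\sum_i\sq^i$ is a ring endomorphism, $\wbar(M)=w(M)^{-1}=\sq\bigl(v(M)^{-1}\bigr)$, so $\wbar_k(M)=\sum_j\sq^{\,k-j}\bigl((v^{-1})_j\bigr)$ with each $(v^{-1})_j$ a polynomial in the $v_i$. By Theorem~\ref{Masthm} this sum vanishes for $k>n-\a(n)$; \cite{Mas} obtains this by locating the top nonvanishing term, using $v_j=0$ for $j>n/2$ and the mod $2$ Adem relations. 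Now suppose $M$ is orientable, so $v_1=w_1=0$. Then in fact \emph{every} odd Wu class vanishes: from $\sq^{2k+1}=\sq^1\sq^{2k}$ and the Wu relation $\langle\sq^1 y,[M]\rangle=\langle w_1 y,[M]\rangle=0$ for $y\in H^{n-1}(M)$, one gets $\langle\sq^{2k+1}x,[M]\rangle=0$ for all $x\in H^{n-2k-1}(M)$, whence $v_{2k+1}(M)=0$. Thus for orientable $M$ only even Wu classes occur, so $v(M)^{-1}$ has only even-degree components. Re-running the estimate with this extra symmetry built in --- together with the Poincar\'e-duality relations among the $w_i$ that it entails, which is how, for instance, $\wbar_3$ is forced to vanish on an orientable $4$-manifold --- lifts the vanishing to $k>n-\a(n)-1$, \emph{unless} the single monomial landing in grading $n-\a(n)$ survives. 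A direct computation with the binary expansion of $n$ shows that it survives exactly when $n\equiv1\pmod4$, which is precisely the discrepancy between $\a(n)$ and $\alh(n)$. Making this bookkeeping uniform in $n$ is the main technical obstacle; it is Massey's computation redone with ``only even Wu classes'' imposed.

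For the existence half, I would argue nonconstructively. Let $J_n\subseteq H^*(BSO;\zt)$ be the ideal of universal classes vanishing on the tangent bundle of every closed orientable $n$-manifold; by definition, $\wbar_j$ is nonzero on some orientable $n$-manifold if and only if $\wbar_j\notin J_n$. The input needed is a description of $J_n$: oriented bordism theory (Thom; Wall at the prime $2$) identifies $J_n$ in degrees $\le n$ with the ideal generated by the ``oriented Wu relations'' --- the classes $v_j$ with $2j>n$, the odd Wu classes $v_{2k+1}$, and the Poincar\'e-duality relations they entail. Granting this, it remains to show $\wbar_{n-\alh(n)}\notin J_n$, which is the same mod $2$ binomial calculation as in the vanishing half, read in the affirmative direction. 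The nonconstructive feature is exactly that the bordism-theoretic description of $J_n$ certifies the existence of a suitable manifold without producing one; supplying explicit real Bott manifold examples for $n\not\equiv0\pmod4$ is the point of the present paper. The delicate step should be making the description of $J_n$ precise --- determining which Poincar\'e-duality relations must be adjoined to the naive Wu relations --- after which the non-membership of $\wbar_{n-\alh(n)}$ and the membership of $\wbar_k$ for $k>n-\alh(n)$ are two faces of one computation.
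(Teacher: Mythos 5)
The paper does not prove Theorem \ref{DWthm}: it is quoted from \cite{DW}, which in turn reformulates \cite{Papa}, drawing on \cite{MP} and \cite{BP}. So there is no in-paper argument to compare yours against, and the fair question is whether your plan would stand on its own. Its architecture does match the route actually taken in those references: the vanishing half does go through Wu classes, your derivation that odd Wu classes vanish on orientable manifolds (via the Adem relation $\sq^1\sq^{2k}=\sq^{2k+1}$ and $v_1=w_1=0$) is correct and is the standard starting point, and the existence half really is a Brown--Peterson/Thom-style nonconstructive argument about the ideal of universally vanishing classes, which is exactly why the present paper exists.

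The gap is that both halves of your plan bottom out in the two steps that constitute essentially all of the actual work, and you leave both as placeholders. For the vanishing half, ``re-running the estimate with only even Wu classes imposed'' and ``a direct computation with the binary expansion of $n$ shows that it survives exactly when $n\equiv1\pmod4$'' is precisely the theorem; the mod~$4$ dichotomy is not a routine perturbation of Massey's argument but requires identifying exactly which Milnor basis elements of $\chi\sq^{n-\a(n)}$ can act nontrivially (compare the role of Lemma \ref{Blem} in this paper, and the second half of the proof of \cite[Theorem 1.3(c)]{DW}), and showing that the relevant one is killed by the extra Wu relations unless $n\equiv1\pmod4$. For the existence half, the identification of $J_n$ with the ideal generated by the ``oriented Wu relations'' is the main theorem of \cite{BP}, not a consequence of Thom and Wall that can be granted in passing, and the ``Poincar\'e-duality relations they entail'' are exactly the part that must be pinned down before the non-membership of $\wbar_{n-\alh(n)}$ can even be formulated as a computation. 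As written, your proposal is a correct and well-informed road map to the literature, but it is not a proof: the two computations it defers are the content of the theorem.
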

This theorem was a reformulation of a result of \cite{Papa}, which drew heavily from results in \cite{MP} and \cite{BP}. The existence part was nonconstructive, and the goal of this paper is to find explicit orientable $n$-manifolds with $\wbar_{n-\alh(n)}(M)\ne0$. Our main theorem, \ref{main}, accomplishes this with real Bott manifolds for all $n\not\equiv0$ mod 4.  
 If $n$ is a 2-power $\ge4$, complex projective space $CP^{n/2}$ provides an example of an orientable $n$-manifold with $\wbar_{n-\alh(n)}\ne0$. For $n\equiv0$ mod 4 and not a 2-power, we do not know explicit orientable $n$-manifolds with $\wbar_{n-\alh(n)}\ne0$.  See Section \ref{mfsec}.

Real Bott manifolds have been studied recently by many authors, for example (\cite{Choi}, \cite{Ds}). The following known result describes the properties relevant to our work.

\begin{thm} If $A=(a_{i,j})$ is an $n$-by-$n$ strictly upper-triangular binary matrix, there is an $n$-manifold $B_{n}$  with
$$H^*(B_{n};\zt)\approx\zt[x_1,\ldots,x_n]/(x_j^2=\sum_{i=1}^{j-1}a_{i,j}x_ix_j),$$ where $|x_i|=1$.
The manifold is orientable if and only if each row of $A$ contains an even number of $1$'s.
\end{thm}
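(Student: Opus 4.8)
The plan is to realize $B_n$ as an iterated $RP^1$-bundle (a \emph{real Bott tower}) and to read off both assertions from standard projective-bundle formulas. \textbf{Construction.} Take $B_0$ to be a point, and inductively, having built $B_{k-1}$ with $H^1(B_{k-1};\zt)$ free on $x_1,\dots,x_{k-1}$, let $L_k\to B_{k-1}$ be the real line bundle classified by $\sum_{i=1}^{k-1}a_{i,k}x_i\in H^1(B_{k-1};\zt)$ (real line bundles over a space $X$ are classified by $H^1(X;\zt)$), and put $B_k=\mathbb{P}(\underline{\R}\oplus L_k)$, the fiberwise projectivization of the rank-$2$ bundle $\underline{\R}\oplus L_k$. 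This is an $RP^1$-bundle over $B_{k-1}$, hence a closed smooth $k$-manifold, and $B_n$ is the desired manifold.

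\textbf{Cohomology.} I would invoke the projective-bundle theorem (a form of Leray--Hirsch): for a rank-$2$ real bundle $E\to X$, $H^*(\mathbb{P}(E);\zt)$ is $H^*(X;\zt)$-free on $1,x$, where $x=w_1$ of the tautological line bundle and $x^2=w_1(E)\,x+w_2(E)$. Applied to $E=\underline{\R}\oplus L_k$, for which $w_1(E)=\sum_{i<k}a_{i,k}x_i$ and $w_2(E)=0$, this gives $x_k^2=\sum_{i<k}a_{i,k}x_ix_k$ over $\zt$; an induction on $k$ then yields the stated presentation of $H^*(B_n;\zt)$. In particular $x_1,\dots,x_n$ are linearly independent in $H^1(B_n;\zt)$, since all the relations are quadratic.

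\textbf{Orientability.} The tangent bundle of the tower splits as $TB_n\cong\bigoplus_{k=1}^n\pi_k^*\tau_k$, where $\tau_k$ is the bundle tangent to the fibers of $B_k\to B_{k-1}$ and $\pi_k\colon B_n\to B_k$ is the composite projection. For an $RP^1$-bundle $\mathbb{P}(E)\to X$ with $E$ of rank $2$, this vertical bundle is $\gamma\otimes(\pi^*E/\gamma)$, where $\gamma$ is the tautological line bundle, so mod $2$ its first Stiefel--Whitney class equals $w_1(\gamma)+\bigl(w_1(\pi^*E)+w_1(\gamma)\bigr)=w_1(\pi^*E)$; for $E=\underline{\R}\oplus L_k$ this is $\sum_{i<k}a_{i,k}x_i$. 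Summing over $k$, $w_1(B_n)=\sum_{i=1}^n\bigl(\sum_{k>i}a_{i,k}\bigr)x_i$ in $H^1(B_n;\zt)$, and by the linear independence of the $x_i$ this class vanishes iff $\sum_{k>i}a_{i,k}$ is even for every $i$, i.e. each row of $A$ contains an even number of $1$'s.

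\textbf{Main obstacle.} The only delicate point is the identification of $\tau_k$ as $\gamma\otimes(\pi^*E/\gamma)$ and the ensuing mod-$2$ arithmetic with $w_1$; everything else is a routine application of Leray--Hirsch and the Whitney sum formula. One should also pin down the sign/indexing convention in the projective-bundle theorem, but over $\zt$ that ambiguity is harmless.
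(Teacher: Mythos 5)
Your construction is exactly the paper's: an iterated $RP^1$-bundle tower with $B_k=\mathbb{P}(\underline{\R}\oplus L_k)$ and $w_1(L_k)=\sum_{i<k}a_{i,k}x_i$, with the cohomology read off from the projective-bundle theorem and orientability from $w_1(B_n)=\sum_i\bigl(\sum_{k>i}a_{i,k}\bigr)x_i$. The only difference is that the paper cites the literature for the cohomology presentation and the orientability criterion, whereas you supply the (correct) standard arguments; your proposal is fine.
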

\begin{proof} The manifold is formed from a Bott tower
$$B_n\to B_{n-1}\to\cdots\to B_1\to B_0=\{*\}.$$
Each $B_j$ is the projectification of $\xi_j\oplus\eps$, where $\xi_j$ and $\eps$ are line bundles over $B_{j-1}$, with $\eps$ trivial, and $w_1(\xi_j)=\dsum_{i=1}^{j-1}a_{i,j}x_i$. The cohomology result is \cite[Lemma 2.1]{KM}. The orientability claim is well-known. For example, \cite[Proposition 2.5]{Ds}.\end{proof}

Our main theorem, \ref{main}, gives the desired examples  when $n\equiv1$ mod 4, while  Corollary \ref{cormain} gives the examples when $n\equiv2,3$ mod 4.
\begin{thm} Let $n\equiv1\mod 4$. The Bott manifold $B_n$ with relations
$x_n^2=(x_1+\cdots+x_{n-2})x_n$, $x_i^2=x_{i-1}x_i$ for $2\le i\le n-1$, and $x_1^2=0$ is orientable with $\wbar_{n-\alh(n)}\ne0$.\label{main}
\end{thm}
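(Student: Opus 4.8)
The plan is to verify orientability by inspecting the matrix, to get a closed form for $w(B_n)$, to reduce the assertion $\wbar_{n-\alh(n)}(B_n)\ne0$ to one coefficient in a Poincar\'e duality subalgebra, and to compute that coefficient — the step where the hypothesis $n\equiv1\pmod4$ is essential.

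First, orientability. The defining matrix $A=(a_{i,j})$ has $a_{j-1,j}=1$ for $2\le j\le n-1$, $a_{i,n}=1$ for $1\le i\le n-2$, and all other entries $0$; hence row $1$ has $1$'s only in columns $2$ and $n$, row $i$ (for $2\le i\le n-2$) only in columns $i+1$ and $n$, and rows $n-1,n$ are zero. Every row has an even number of $1$'s, so $B_n$ is orientable by the theorem above. For the total Stiefel--Whitney class, the Bott tower gives $w(B_n)=\prod_{j=1}^{n}(1+x_j)(1+x_j+\alpha_j)$ with $\alpha_j=\sum_{i<j}a_{i,j}x_i$, so $\alpha_1=0$, $\alpha_j=x_{j-1}$ for $2\le j\le n-1$, and $\alpha_n=\sigma:=x_1+\cdots+x_{n-2}$. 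Since $x_j^2=\alpha_jx_j$, each factor collapses to $(1+x_j)(1+x_j+\alpha_j)=1+\alpha_j$, so that
$$w(B_n)=(1+\sigma)\prod_{j=1}^{n-2}(1+x_j),\qquad \wbar(B_n)=(1+\sigma)^{-1}\prod_{j=1}^{n-2}(1+x_j)^{-1}.$$
Neither $x_{n-1}$ nor $x_n$ appears, so $\wbar(B_n)$ lies in the subalgebra $R\subseteq H^*(B_n;\zt)$ generated by $x_1,\dots,x_{n-2}$.

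Next I would work inside $R$. By the theorem above, $R\cong\zt[x_1,\dots,x_{n-2}]/(x_1^2,\;x_j^2-x_{j-1}x_j:2\le j\le n-2)$ is the mod-$2$ cohomology of a real Bott $(n-2)$-manifold, hence a Poincar\'e duality algebra of formal dimension $n-2$ with socle $x_1\cdots x_{n-2}$, and the inclusion $R\hookrightarrow H^*(B_n;\zt)$ is injective. As $n\equiv1\pmod4$ we have $\alh(n)=\a(n)$, and for $n\ge5$ also $\a(n)\ge2$, so $d:=n-\alh(n)$ satisfies $0<d\le n-2$ and $\wbar_d(B_n)\in R^{d}$; it therefore suffices to show this class is nonzero in $R$. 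By Poincar\'e duality in $R$ that is equivalent to finding $z\in R$ of degree $\alh(n)-2$ with $\wbar(B_n)\cdot z$ having coefficient $1$ on $x_1\cdots x_{n-2}$. (For $n=1$ the statement is the trivial $\wbar_0=1$.)

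Finally, the computation of that coefficient, which is the crux. In $R$ one has $(1+x_j)^{-1}=1+\sum_{k=1}^{j}x_kx_{k+1}\cdots x_j$; repeated squaring of the relations yields $\sigma^{2^{a}}=\sum_j x_jx_{j+1}\cdots x_{j+2^{a}-1}$, the sum over all blocks of $2^{a}$ consecutive indices in $\{1,\dots,n-2\}$; and a direct expansion shows that in $\prod_{j=1}^{n-2}(1+x_j)^{-1}$ the squarefree monomial $x_S$ occurs with coefficient $1$ exactly when every maximal run of consecutive integers of $S$ has odd length. Substituting these into $\wbar(B_n)\cdot z=(1+\sigma)^{-1}\bigl(\prod_{j}(1+x_j)^{-1}\bigr)z$ and reducing the monomials that arise modulo $x_j^2=x_{j-1}x_j$ — which amounts to tracking how products merge runs of consecutive indices — one extracts the coefficient of $x_1\cdots x_{n-2}$; with $z$ chosen according to the positions of the $1$'s in the binary expansion of $n$, it collapses to a product of mod-$2$ binomial coefficients equal to $1$ by Lucas' theorem, precisely because $n\equiv1\pmod4$. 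The main obstacle is exactly this ``run-merging'' bookkeeping, i.e.\ identifying which reduced monomials survive and matching them with the relevant binomial coefficients.
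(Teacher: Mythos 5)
Your setup is correct and takes a genuinely different route from the paper: you compute the total Stiefel--Whitney class directly from the Bott tower, observe that $w(B_n)=(1+\sigma)\prod_{j=1}^{n-2}(1+x_j)$ with $\sigma=x_1+\cdots+x_{n-2}$, note that $\wbar(B_n)$ therefore lives in the Poincar\'e duality subalgebra $R$ generated by $x_1,\dots,x_{n-2}$, and reduce the theorem to showing that $\wbar_{n-\a(n)}\cdot z$ hits the socle $x_1\cdots x_{n-2}$ for a suitable $z$ of degree $\a(n)-2$. All of that checks out (your orientability count, the collapse $(1+x_j)(1+x_j+\alpha_j)=1+\alpha_j$, the formulas for $(1+x_j)^{-1}$ and $\sigma^{2^a}$, and the injectivity of $R\hookrightarrow H^*(B_n)$ are all right, and your formula for $w$ agrees with the one the paper quotes from the literature in Section 3). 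The paper instead dualizes: it shows $\chi\sq^{n-\a(n)}(z)\ne0$ for an explicit $z\in H^{\a(n)}(B_n)$, using that only one Milnor basis element can act nontrivially, so the computation becomes a sum over $2^r$ permutations of $2$-power exponents.

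The genuine gap is that your final paragraph asserts, rather than proves, the entire content of the theorem. You do not specify $z$, you do not carry out the ``run-merging'' reduction of $(1+\sigma)^{-1}\prod_j(1+x_j)^{-1}\cdot z$ to the socle, and the claim that the coefficient ``collapses to a product of mod-$2$ binomial coefficients equal to $1$ by Lucas' theorem, precisely because $n\equiv1\pmod4$'' is unsupported --- there is no identification of which binomial coefficients arise or of where the congruence enters. This step is exactly where all the difficulty lies: in the paper's version it requires a nontrivial parity theorem ($(x_1+\cdots+x_m)^m=0$ in $Q_m$ for $m=2^p-1$, proved by counting monomials with ``gaps''), two auxiliary vanishing results, and a four-case analysis of the surviving permutations; and the paper's Section 3 shows that for $n=12$ the analogous class $\wbar_9$ of the very same construction \emph{vanishes}, so whatever mechanism makes the coefficient nonzero must be delicate and cannot be a formal Lucas-type identity that one waves at. Until you exhibit $z$, organize the expansion, and actually verify that exactly an odd number of reduced monomials equal $x_1\cdots x_{n-2}$ (distinguishing $n\equiv1$ from $n\equiv0\bmod 4$), the proof is not there.
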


For example, the Bott manifold $B_5$ corresponds to the following matrix:
$$\begin{pmatrix}
0&1&0&0&1\\
0&0&1&0&1\\
0&0&0&1&1\\
0&0&0&0&0\\
0&0&0&0&0\end{pmatrix}$$

\begin{cor} For $n\equiv2$ $($resp.~$3)$ $\mod 4$, $B_{n-1}\times S^1$ (resp.~$B_{n-2}\times S^1\times S^1$) is an orientable  $n$-dimensional Bott manifold with $\wbar_{n-\alh(n)}\ne0$. Here $B_{n-1}$ and $B_{n-2}$ are the manifolds of Theorem \ref{main}.\label{cormain}\end{cor}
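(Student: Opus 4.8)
The plan is to deduce Corollary \ref{cormain} from Theorem \ref{main} by combining two easy ingredients: the behavior of dual Stiefel-Whitney classes under Cartesian products with $S^1$, and an elementary computation of $\alh$ in binary. I do not expect a serious obstacle here; the only points needing care are the binary bookkeeping and two standard facts about $S^1$ (that it is parallelizable and that it is itself a real Bott manifold).

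First I would record the product formula. Since the tangent bundle of a product is the external Whitney sum of the factors' tangent bundles, $w(M\times N)=w(M)\times w(N)$, and inverting the total class gives $\wbar(M\times N)=\wbar(M)\times\wbar(N)$; reading off degree $k$ through the K\"unneth isomorphism $H^*(M\times N;\zt)\cong H^*(M;\zt)\otimes H^*(N;\zt)$ yields $\wbar_k(M\times N)=\sum_{a+b=k}\wbar_a(M)\times\wbar_b(N)$. Because $S^1$ is parallelizable, $\wbar(S^1)=1$, so $\wbar_k(M\times S^1)$ corresponds to $\wbar_k(M)\otimes1$ and $\wbar_k(M\times S^1\times S^1)$ to $\wbar_k(M)\otimes1\otimes1$; in either case it is nonzero precisely when $\wbar_k(M)\ne0$, since over the field $\zt$ the external product of nonzero homogeneous classes is nonzero.

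Next I would do the arithmetic. If $n=4k+2$ then the binary expansions of $n$ and of $n-1=4k+1$ agree in all digits above the last two, while $n$ ends in $10$ and $n-1$ ends in $01$; hence $\a(n)=\a(n-1)$. Since $n\not\equiv1$ but $n-1\equiv1\pmod4$, this gives $\alh(n)=\a(n)+1=\a(n-1)+1=\alh(n-1)+1$, so $n-\alh(n)=(n-1)-\alh(n-1)$. If $n=4k+3$ then the expansions of $n$ and of $n-2=4k+1$ agree above the last two digits, with $n$ ending in $11$ and $n-2$ ending in $01$; hence $\a(n)=\a(n-2)+1$ and $\alh(n)=\a(n)+1=\a(n-2)+2=\alh(n-2)+2$, so $n-\alh(n)=(n-2)-\alh(n-2)$.

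Finally I would assemble the pieces. For $n\equiv2\pmod4$ we have $n-1\equiv1\pmod4$, so $B_{n-1}$ is the manifold of Theorem \ref{main} and $\wbar_{(n-1)-\alh(n-1)}(B_{n-1})\ne0$; then by the two previous steps $\wbar_{n-\alh(n)}(B_{n-1}\times S^1)=\wbar_{(n-1)-\alh(n-1)}(B_{n-1})\otimes1\ne0$. The case $n\equiv3\pmod4$ is identical, using $B_{n-2}\times S^1\times S^1$ and $n-\alh(n)=(n-2)-\alh(n-2)$. It remains to note that these products are orientable $n$-dimensional real Bott manifolds: $S^1=RP^1$ is the Bott manifold of the $1\times1$ zero matrix, a product of real Bott manifolds is again a real Bott manifold (concatenate the Bott towers, i.e.\ take the block-diagonal direct sum of the defining matrices), and appending zero rows and columns preserves the property that every row has an even number of $1$'s, so orientability is preserved. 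This completes the proof.
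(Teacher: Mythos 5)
Your proposal is correct and follows essentially the same route as the paper's one-line proof: the Whitney product formula together with parallelizability of $S^1$, the congruence computation showing $m-\alh(m)=m+1-\alh(m+1)=m+2-\alh(m+2)$ for $m\equiv1\pmod 4$, and the observation that appending trivial stages to the Bott tower keeps the product a real Bott manifold. You have simply written out the details (K\"unneth, the binary arithmetic, orientability of the enlarged matrix) that the paper leaves implicit, and all of them check out.
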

\begin{proof} This is immediate from the Whitney product formula, triviality of the tangent bundle of $S^1$, Theorem \ref{main}, and that if $m\equiv1$ mod 4, then $m-\alh(m)=m+1-\alh(m+1)=m+2-\alh(m+2)$. These are Bott manifolds by appending one or two steps to the Bott tower with $\xi_j$ trivial.
\end{proof}

\section{Proof of Theorem \ref{main}}\label{pfsec}
The case $n=1$ has $B_n\approx S^1$ with $n-\alh(n)=0$, and $\wbar_0(S^1)\ne0$. From now on, we assume $n>1$. Recall that $\alh(n)=\a(n)$ since $n\equiv1$ mod 4.

We will prove that there is a class $z\in H^{\a(n)}(B_n)$ such that $\chi\sq^{n-\a(n)}(z)\ne0$. Here $\chi$ is the antiautomorphism of the mod 2 Steenrod algebra.
This implies Theorem \ref{main} by the following standard argument (e.g., \cite[\S3]{MP}). The $S$-dual of the manifold $B_n$ is the Thom space of its stable normal bundle, and dual to $\chi\sq^{n-\a(n)}$ into the top class of the manifold is $\sq^{n-\a(n)}$ on the Thom class  of the normal bundle. But this equals $w_{n-\a(n)}$ of the normal bundle, which  is $\wbar_{n-\a(n)}$ of the manifold.

By \cite{Mil}, $\chi\sq^{n-\a(n)}$ equals the sum of all Milnor basis elements of grading $n-\a(n)$.
Let $n=2^{p_r}+\cdots+2^{p_1}+1$ with $p_r>\cdots> p_1\ge2$ and $r=\a(n)-1$. Then $n-\a(n)=2^{p_r}+\cdots+2^{p_1}-r$. A Milnor basis element $\sq(t_1,\ldots,t_s)$ has grading $\sum t_i(2^i-1)$ and vanishes on classes of grading less than $\sum t_i$. We omit the proof of the following lemma, which is very similar to the second half of the proof of \cite[Theorem 1.3(c)]{DW}.

\begin{lem} A tuple $(t_1,\ldots,t_s)$ satisfies $\sum t_i(2^i-1)=2^{p_r}+\cdots +2^{p_1}-r$ and $\sum t_i\le r$ if and only if
$$t_i=\begin{cases}1&i=p_j \text{ for some }j\\ 0&\text{otherwise.}\end{cases}$$\label{Blem}
\end{lem}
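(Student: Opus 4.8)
The plan is to analyze the two constraints $\sum_i t_i(2^i-1) = 2^{p_r}+\cdots+2^{p_1}-r$ and $\sum_i t_i \le r$ simultaneously, where $r = \a(n)-1$. First I would rewrite the grading equation as $\sum_i t_i 2^i = (2^{p_r}+\cdots+2^{p_1}) - r + \sum_i t_i = N + (\sum_i t_i - r)$, where $N = 2^{p_r}+\cdots+2^{p_1}$ has exactly $r$ ones in binary. Setting $m = r - \sum_i t_i \ge 0$, this says $\sum_i t_i 2^i = N - m$. The stated solution $t_i = [\,i \in \{p_1,\dots,p_r\}\,]$ has $\sum_i t_i 2^i = N$ and $\sum_i t_i = r$, i.e.\ $m=0$; the content of the lemma is that this is forced.

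The key step is to show $m = 0$. I would argue that decreasing the target from $N$ to $N-m$ while also being allowed one extra unit of ``mass'' ($\sum t_i$ can drop by $m$, but the exponents $2^i-1$ versus $2^i$ only account for part of this) cannot produce a valid tuple unless $m=0$. Concretely, from $\sum_i t_i 2^i = N - m$ and $\sum_i t_i = r - m$, subtract to get $\sum_i t_i(2^i - 1) = N - m - (r-m) = N - r$, which is automatically consistent — so the real constraint is just the pair $\sum_i t_i 2^i = N-m$, $\sum_i t_i = r-m$ with all $t_i \ge 0$ integers. Now I would invoke the standard fact that for a nonnegative integer $M$, the minimum of $\sum_i t_i$ over tuples with $\sum_i t_i 2^i = M$ is $\a(M)$, the binary digit sum, achieved uniquely by the binary expansion (here $i$ ranges over $i \ge 1$; note $t_0$ is not available since $2^0 - 1 = 0$ would let $t_0$ be arbitrary — I need to check the index range in the Milnor basis convention, where $i \ge 1$). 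Thus $r - m = \sum_i t_i \ge \a(N-m)$. Since $N$ has its lowest one at $2^{p_1}$ with $p_1 \ge 2$, and $m \le r$ is small, one shows $\a(N-m) \ge r - 1 + (\text{something})$; more carefully, subtracting $m \le r$ from $N$ (whose $r$ set bits are all at positions $\ge 2$) can only increase the digit sum unless $m=0$: borrowing across the gap below $2^{p_1}$ turns a single high bit into many low bits. A short case analysis on whether $m < 2^{p_1}$ or not, using $\a(N-m) \ge \a(N) - \a(m) + (\text{carries})$ bounds, pins down $m = 0$, and then uniqueness of the binary expansion of $N$ gives exactly the claimed tuple.

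The main obstacle I anticipate is making the inequality $\a(N-m) \le r - m$ genuinely impossible for $m \ge 1$ airtight: naive digit-sum bounds like $\a(N-m)\ge \a(N)-\a(m)$ go the wrong way, so I would instead track the binary subtraction $N - m$ directly, exploiting that all of $N$'s bits lie at positions $\ge p_1 \ge 2$, so borrowing to subtract any $m \ge 1$ necessarily fills in bits at positions $0$ and $1$ and typically cascades, making $\a(N-m) \ge \a(N) = r > r-m$. Since the excerpt explicitly says this proof is ``very similar to the second half of the proof of \cite[Theorem 1.3(c)]{DW}'' and is omitted, I would likewise keep this step brief, citing that argument rather than reproducing the carry analysis in full.
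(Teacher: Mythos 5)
The paper omits its own proof of this lemma (deferring to the second half of \cite[Theorem 1.3(c)]{DW}), so there is no argument to compare against line by line; but your reduction is the standard one and is correct in structure. Writing $m=r-\sum t_i\ge0$, the hypotheses become $\sum t_i2^i=N-m$ and $\sum t_i=r-m$ with $N=2^{p_r}+\cdots+2^{p_1}$, and the minimality (with uniqueness) of the binary representation among representations $\sum_{i\ge1}t_i2^i=M$ gives $r-m\ge\a(N-m)$, after which one must rule out $m\ge1$. Your only misstep is the parenthetical claim that the subadditivity bound ``goes the wrong way'': in fact $\a(N-m)\ge\a(N)-\a(m)=r-\a(m)$ is a lower bound on $\a(N-m)$, which is exactly the direction needed, and it closes the argument with no carry-tracking at all. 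Since each $2^i$ with $i\ge1$ is even and $N$ is divisible by $4$ (as $p_1\ge2$), the equation $\sum t_i2^i=N-m$ forces $m$ to be even; and for $m\ge2$ one has $\a(m)\le m-1$, whence $\a(N-m)\ge r-\a(m)\ge r-m+1>r-m$, contradicting $r-m\ge\a(N-m)$. Hence $m=0$, and uniqueness of the minimal representation of $N$ pins down the $t_i$ as claimed. So the borrow-cascade case analysis you were bracing for in the last paragraph is unnecessary, but the proof as you outline it is sound and completable.
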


Let $\b$ denote the Milnor basis element $\sq(t_1,\ldots,t_{p_r})$ for the $t_i$'s of Lemma \ref{Blem}.
Then Lemma \ref{Blem} and the preceding paragraph imply that $\chi\sq^{n-\a(n)}(z)=\b(z)$ on any class $z$ with $|z|\le r$.

For typographical reasons, we let $P_i=2^{p_i}$, so $n=P_r+\cdots+P_1+P_0$ with $P_0=1$.
A basis for $H^{\a(n)}(B_n)$ consists of all $x_{i_1}\cdots x_{i_{\a(n)}}$ with $1\le i_1<\cdots<i_{\a(n)}\le n$. Our class $z$ is chosen to be the one with 
\begin{equation}\label{idef}i_j=\begin{cases}P_j+\cdots+P_1&1\le j\le r\\
n&j=\a(n)=r+1.\end{cases}\end{equation}

Recall that $\cP_t^0$ denotes the Milnor basis element with only nonzero entry a 1 in position $t$, and that $\cP_t^0(x)=x^{2^t}$ if $|x|=1$. Using this and the Cartan formula, $\chi\sq^{n-\a(n)}(z)$
is the sum  over all permutations of $\{P_0,\ldots,P_r\}$ of products of $x_{i_1},\ldots,x_{i_{\a(n)}}$ raised to these 2-power exponents. Here, $i_j$ are as defined in (\ref{idef}).  See (\ref{permsum}) for a restatement involving notation defined in the interim.

The relations in $H^*(B_n)$ on classes $x_i$ for $i<n$ imply that
\begin{equation}\label{xie}x_i^e=\begin{cases}x_{i-e+1}\cdots x_i&e\le i\\ 0&e>i.\end{cases}\end{equation}
Also, $x_n^{2^p}=(x_1+\cdots+x_{n-2})^{2^p-1}x_n$, as is easily proved by induction on  $p$.
We will use these facts repeatedly. %It implies that, for $i<n$,
%$x_{i}^{j}=0\text{ iff }j>i$.
%Hence \begin{equation}\label{chi}\chi\sq^{n-\a(n)}=x_{i_1}x_{i_2}^{2^{p_1}}\cdots x_{i_r}^{2^{p_{r-1}}}x_n^{2^{p_r}}=\prod_{j=2}^{i_r}x_j\cdot x_n^{2^{p_r}}=J\cdot x_n^{2^{p_r}},\end{equation}
%where $J$ is the indicated product.

As a warmup, we consider the case $\a(n)=2$, so $n=P+1$ with $P=2^p$, $P\ge4$. Then $z=x_Px_n$, and
$$\chi\sq^{n-\a(n)}(z)=x_P^Px_n+x_Px_n^P=x_1\cdots x_n+(x_1+\cdots+x_{n-2})^{P-1}x_{n-1}x_n.$$
Since $x_1\cdots x_n$ is the nonzero element of $H^n(B_n)$, the case $\a(n)=2$ of Theorem \ref{main} follows from the following key result.

\begin{thm}\label{key} Let $m=2^p-1$ with $p>1$. Let $Q_m$ denote the ring $\zt[x_1,\ldots,x_m]$ with relations $x_i^2=x_{i-1}x_i$ for $2\le i\le m$, and $x_1^2=0$. Then
$$(x_1+\cdots+x_m)^m=0.$$
\end{thm}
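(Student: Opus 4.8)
The plan is to work in the ring $Q_m$ with $m = 2^p - 1$ and compute $(x_1+\cdots+x_m)^m$ directly by expanding it as a sum of monomials $x_1^{e_1}\cdots x_m^{e_m}$ over compositions $(e_1,\ldots,e_m)$ of $m$, then reducing each monomial using the relations $x_i^2 = x_{i-1}x_i$ (for $i\ge 2$) and $x_1^2 = 0$. The key observation I expect to exploit is \eqref{xie}: the relations imply $x_i^e = x_{i-e+1}\cdots x_i$ when $e\le i$ and $x_i^e = 0$ when $e>i$. So a monomial $x_1^{e_1}\cdots x_m^{e_m}$ reduces, after absorbing each power $x_i^{e_i}$ into the ``interval'' $x_{i-e_i+1}\cdots x_i$, to either $0$ (if some $e_i > i$, in particular if $e_1 \ge 2$) or to a product of these intervals $[i-e_i+1,\,i]$. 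Such a product is nonzero in $Q_m$ precisely when the intervals are pairwise disjoint, in which case it is the squarefree monomial on their union; and since $\sum e_i = m$ and the ambient index set $\{1,\ldots,m\}$ also has size $m$, disjointness forces the intervals to tile $\{1,\ldots,m\}$ exactly.

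**Reduction to a combinatorial count.** After this reduction, $(x_1+\cdots+x_m)^m$ becomes $\bigl(\sum_{\text{good compositions}} c_{(e_1,\ldots,e_m)}\bigr)\, x_1x_2\cdots x_m$, where the sum is over compositions $(e_1,\ldots,e_m)$ of $m$ whose associated intervals tile $\{1,\ldots,m\}$, and $c_{(e_1,\ldots,e_m)} = \binom{m}{e_1,\ldots,e_m}$ is the multinomial coefficient. A composition gives a tiling iff it has the following shape: there is a set of ``endpoints'' and the nonzero $e_i$ occur exactly at those endpoints, with $e_i$ equal to the gap since the previous endpoint. Equivalently, good compositions correspond bijectively to compositions of $m$ in the ordinary sense (ordered sequences of positive parts summing to $m$): a composition $m = b_1 + b_2 + \cdots + b_k$ with all $b_j \ge 1$ corresponds to filling $e_{b_1} = b_1$, $e_{b_1+b_2} = b_2$, etc. So I need to show
$$\sum_{k\ge 1}\ \sum_{\substack{b_1+\cdots+b_k = m\\ b_j\ge 1}} \binom{m}{b_1,\,b_2,\,\ldots,\,b_k} \equiv 0 \pmod 2.$$

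**The arithmetic heart.** The inner-plus-outer sum $\sum_{k}\sum_{b_1+\cdots+b_k=m}\binom{m}{b_1,\ldots,b_k}$ is the classical \emph{ordered Bell number} (Fubini number) $a(m)$, counting ordered set partitions of an $m$-element set. So the theorem reduces to: the $m$-th ordered Bell number is even when $m = 2^p - 1$, $p > 1$. The ordered Bell numbers satisfy the recursion $a(m) = \sum_{j=1}^m \binom{m}{j} a(m-j)$ with $a(0)=1$, and their mod-2 behavior is well understood: in fact $a(m)$ is odd iff $m \in \{0, 1\}$, and $a(m)$ is even for all $m \ge 2$ (one quick route: $a(m) = \sum_{k\ge 0} k!\, S(m,k)$, and mod 2 only $k = 0, 1, 2, 3$ contribute since $k!$ is even for $k\ge 2$ except one needs $S(m,2), S(m,3)$ parities — better to use that the exponential generating function $1/(2-e^x)$ gives $a(m) \equiv$ the number of ``ordered partitions'' and reduce mod $2$ via the recursion directly). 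I would present the clean self-contained version: reduce the recursion mod $2$ to $a(m) \equiv \sum_{j=1}^m \binom{m}{j} a(m-j)$, use $a(0)=1$, $a(1)=1$, $a(m)\equiv 0$ for $2\le m\le m-1$ by induction, leaving $a(m) \equiv \binom{m}{m} a(0) + \binom{m}{m-1} a(1) = 1 + m \equiv 1 + 1 = 0 \pmod 2$ since $m = 2^p-1$ is odd. Actually this shows $a(m)$ even for \emph{every} odd $m\ge 3$, which suffices.

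**Main obstacle.** The genuinely delicate step is the first one: justifying rigorously that the only monomials surviving the reduction are the tilings of $\{1,\ldots,m\}$, and that each surviving tiling contributes exactly $x_1\cdots x_m$ with multinomial coefficient as the multiplicity — i.e., that there is no further collapse or cancellation among distinct tilings (there isn't, since they all equal the same element $x_1\cdots x_m$, so the multiplicities simply add). One must be careful that applying $x_i^{e_i} = x_{i-e_i+1}\cdots x_i$ and \emph{then} multiplying intervals is consistent — the cleanest way is to note $Q_m$ has $\{x_S : S\subseteq\{1,\ldots,m\}\}$ as a $\zt$-basis (this follows from the Bott manifold cohomology description with $m$ generators, or directly from the relations), and a product $\prod_j x_{I_j}$ of squarefree monomials over intervals $I_j$ equals $x_{\bigcup I_j}$ if the $I_j$ are disjoint and $0$ otherwise, which one checks by a short induction on the relations. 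Once that bookkeeping is in place the result is the parity computation above. \qed
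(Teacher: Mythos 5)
Your overall strategy --- expand the power, decide which monomials vanish in $Q_m$, and check that the surviving coefficient is even --- has the same general shape as the paper's argument, but both of the key steps in your execution are false, and the two errors happen to compensate so that you still land on the advertised conclusion.

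First, the characterization of the nonzero monomials is wrong. You claim that after rewriting $x_i^{e_i}$ as the interval $x_{i-e_i+1}\cdots x_i$, the product is nonzero precisely when these intervals are pairwise disjoint (hence tile $\{1,\ldots,m\}$). But overlapping intervals do not give zero: the defining relation $x_i^2=x_{i-1}x_i$ is itself a product of two overlapping singleton intervals $[i,i]$ that merges into $[i-1,i]$ rather than dying. Concretely, in $Q_3$ the monomial $x_2x_3^2$ has intervals $[2,2]$ and $[2,3]$, which overlap, yet $x_2x_3^2=x_2\cdot x_2x_3=x_2^2x_3=x_1x_2x_3\ne0$. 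The correct criterion is the paper's lemma following Theorem \ref{key}: a degree-$m$ monomial $x_1^{e_1}\cdots x_m^{e_m}$ vanishes iff some partial sum satisfies $e_1+\cdots+e_d>d$. Your tilings are only the extremal nonzero cases in which every nonzero exponent sits at the right end of its interval; you omit, e.g., the composition $(0,1,2)$ for $m=3$.

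Second, the arithmetic. Granting your (faulty) characterization, the coefficient of $x_1\cdots x_m$ would be the ordered Bell (Fubini) number $a(m)$, and you assert $a(m)$ is even for odd $m\ge3$ by an induction whose hypothesis is $a(j)\equiv0$ for $2\le j\le m-1$. That hypothesis is false from the start: $a(2)=3$ and $a(3)=13$ are odd, and indeed every Fubini number is odd --- your own recursion gives $a(2)\equiv\binom{2}{1}a(1)+\binom{2}{2}a(0)\equiv 1\pmod 2$. The two mistakes are not independent: if your characterization were correct, the coefficient would be the odd number $a(m)$ and the theorem would be \emph{false}. The monomials you miss (such as $x_2x_3^2$ above, contributing $\binom{3}{0,1,2}=3$) are exactly what restores even parity. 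For contrast, the paper sidesteps multinomial coefficients entirely by writing $(x_1+\cdots+x_m)^m=\prod_{i=0}^{p-1}(x_{2^i}^{2^i}+\cdots+x_m^{2^i})$ in characteristic $2$, so that every monomial in the expansion occurs with coefficient $1$; it then shows that the total number of monomials and the number of vanishing ones (grouped by the location of their ``gap'') are both even, whence the number of nonzero monomials, all equal to $x_1\cdots x_m$, is even.
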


We precede the proof of Theorem \ref{key} with two useful results.
\begin{lem} Let $Q_m$ be as in Theorem \ref{key}.
\begin{itemize}
\item A degree-$m$ monomial $x_1^{e_1}\cdots x_m^{e_m}$  equals $0$
 in $Q_m$  iff for some $d$, $e_1+\cdots+e_d>d$.
\item If $d$ is maximal with respect to this property, then $e_{d+1}=0$. This is called a {\em gap} at $x^{d+1}$.
\end{itemize}
\end{lem}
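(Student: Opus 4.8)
The plan is to prove the "iff" characterization of when a degree-$m$ monomial vanishes in $Q_m = \zt[x_1,\ldots,x_m]/(x_1^2=0,\ x_i^2=x_{i-1}x_i)$, and then to identify the maximal $d$ and the gap. First I would use the relations (\ref{xie}), which apply verbatim to $Q_m$: namely $x_i^e = x_{i-e+1}\cdots x_i$ when $e\le i$ and $x_i^e=0$ when $e>i$. This means any monomial $x_1^{e_1}\cdots x_m^{e_m}$ is zero as soon as some individual exponent is too large, but more subtly, even if every $e_i\le i$, collisions between the "spread-out" forms of the various powers can force the product to zero. So the natural approach is to process the monomial from the bottom variable $x_1$ upward, rewriting $x_i^{e_i}$ as $x_{i-e_i+1}\cdots x_i$ and tracking which variables have already been "used up."

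\textbf{The forward direction.} Suppose $e_1+\cdots+e_d > d$ for some $d$; take the least such $d$. I would argue that in $Q_m$ the submonomial $x_1^{e_1}\cdots x_d^{e_d}$ already vanishes. Since $\sum_{i\le d}e_i > d$ but (by minimality, or we may assume all $e_i \le i$ since otherwise vanishing is immediate) each partial sum $\sum_{i\le c}e_i \le c$ for $c<d$, rewriting $x_i^{e_i}=x_{i-e_i+1}\cdots x_i$ for $i=1,\ldots,d$ and multiplying, I want to show a repeated factor appears among $x_1,\ldots,x_d$. A clean way: think of each $x_i^{e_i}$ as placing $e_i$ tokens on the interval $\{i-e_i+1,\ldots,i\}\subseteq\{1,\ldots,i\}$; the total number of tokens on $\{1,\ldots,d\}$ is $\sum_{i\le d}e_i > d$, so by pigeonhole two tokens land on the same variable $x_k$ with $k\le d$, giving a square $x_k^2 = x_{k-1}x_k$ — and then an easy induction (or a minimal-counterexample argument on $d$) pushes this down until it either produces $x_1^2=0$ or reduces to a smaller instance of the same phenomenon. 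Conversely, if $e_1+\cdots+e_d\le d$ for every $d$, I would show the monomial is nonzero by exhibiting it as (a nonzero scalar times) a squarefree monomial: the condition $\sum_{i\le d}e_i\le d$ for all $d$ is exactly what lets the greedy rewriting $x_i^{e_i}\mapsto x_{(\text{lowest free slot})}\cdots$ succeed without collision, landing the tokens on $e_1+\cdots+e_m$ distinct variables, hence a nonzero product.

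\textbf{The gap claim.} Once $d$ is taken maximal with $e_1+\cdots+e_d>d$, I must show $e_{d+1}=0$. If $e_{d+1}\ge 1$, then $e_1+\cdots+e_{d+1}\ge (d+1)+1 > d+1$ (using $\sum_{i\le d}e_i\ge d+1$), contradicting maximality of $d$. So this part is a one-line arithmetic observation; the real content is entirely in the first bullet.

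\textbf{Main obstacle.} The delicate step is the forward direction's reduction: passing from "pigeonhole gives some square $x_k^2$" to "the whole monomial is zero." Applying $x_k^2=x_{k-1}x_k$ changes exponents and could in principle create a situation where the partial-sum condition is no longer violated at the same place — so I expect to need a carefully chosen induction, most likely on $d$ (the least index where $\sum e_i$ exceeds $i$) together with a secondary quantity such as $\sum_{i\le d}e_i - d$, showing each rewrite either immediately yields $x_1^2=0$ or strictly decreases the pair in a well-ordering while preserving a violated partial-sum inequality. Setting up that bookkeeping cleanly is the crux; everything else is routine manipulation with (\ref{xie}).
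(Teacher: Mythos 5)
Your overall strategy is the same as the paper's: both characterize vanishing by the partial sums $e_1+\cdots+e_d$ versus $d$ (the paper packages these as excess numbers $\Delta_i=e_1+\cdots+e_i-i$), both dispose of the gap claim by the same one-line arithmetic observation, and both handle the backward direction by using the relations to spread the exponents onto distinct slots until the monomial becomes $x_1\cdots x_m$. One small caveat on your backward direction: the literal interval rewriting $x_i^{e_i}=x_{i-e_i+1}\cdots x_i$ can collide even when every partial sum satisfies $e_1+\cdots+e_d\le d$ (e.g.\ $x_2x_3^2$ in $Q_3$ gives intervals $\{2\}$ and $\{2,3\}$), so ``greedy, lowest free slot'' still requires an argument that the relations realize the Hall-type assignment; the paper does this by repeatedly applying a relation at the position just above the negative excess entry of largest subscript, which raises that entry by $1$ and terminates at the zero sequence.

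The genuine gap is exactly where you flag it: the forward direction, which is the entire content of the lemma. You reduce to ``pigeonhole gives some square $x_k^2$'' and then defer the bookkeeping, proposing to induct on the pair $\bigl(d,\ \sum_{i\le d}e_i-d\bigr)$. That measure does not strictly decrease: applying $x_k^2=x_{k-1}x_k$ changes no partial sum except $e_1+\cdots+e_{k-1}$, which increases by $1$, so if $e_1+\cdots+e_{k-1}<k-1$ beforehand then both the least violating index $d$ and the quantity $\sum_{i\le d}e_i-d$ are unchanged and your well-ordering stalls (try $x_3^4x_5$ in $Q_5$: the pair sits at $(3,1)$ for several steps). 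The missing idea is that one must \emph{choose} which relation to apply rather than take an arbitrary collision. The paper's choice: let $\Delta_i$ be the first positive excess; then $\Delta_{i-1}\le0<\Delta_i$ forces $e_i\ge2$, and applying $x_i^2=x_{i-1}x_i$ raises $\Delta_{i-1}$ by $1$ while changing nothing else, so iterating pushes a positive excess monotonically down to $\Delta_1>0$, i.e.\ $e_1\ge2$, whence $x_1^2=0$. (Alternatively your version can be repaired with the potential $\sum_i i\,e_i$, which drops by exactly $1$ at every relation application, together with the invariant $\sum_{i\le d}e_i>d$, which shows the process cannot halt except at $e_1\ge2$.) Without one of these devices the forward direction is not proved.
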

\begin{proof} We define excess numbers $\Delta_i=e_1+\cdots+e_i-i$. The excess sequence  $(\Delta_1,\ldots,\Delta_{m-1},\Delta_m=0)$ satisfies $\Delta_i\ge \Delta_{i-1}-1$, as their difference is $e_i$. The effect of a relation is to increase $\Delta_{i-1}$ by 1 if $\Delta_i>\Delta_{i-1}$. The only way to  create a positive $\Delta$ is if $\Delta_i=0$ and $\Delta_{i+1}>0$. Thus if the excess sequence starts with no positive $\Delta$'s, it can never have any. But such a sequence can be reduced to the 0-sequence since the negative entry with largest subscript can always be increased by 1. This shows that if a monomial has no $d$ as in the lemma, then the monomial equals $x_1\cdots x_m\ne0$ in $Q_m$.

On the other hand, if there is a $d$ as in the lemma, then the excess sequence has a positive entry.  Let $\Delta_i$ be the first positive entry. Then we can increase $\Delta_{i-1}$ until it equals $\Delta_i$. We can continue this until making $\Delta_1>0$, which says the monomial equals 0.

The second part of the lemma is clear, since having $e_{d+1}>0$ would contradict maximality of $d$.
\end{proof}
\begin{cor} A degree-$m$ monomial equals $0$ in $Q_m$ iff it can be decomposed as $A\cdot B$, where $A$ has degree $D$ but involves just $x_i$ for $i<D$, while $B$ is equivalent to $x_{D+1}\cdots x_m$ in $Q_m$.\label{cor}
\end{cor}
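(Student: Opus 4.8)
The plan is to read the corollary off the preceding lemma, which already characterizes the vanishing of a degree-$m$ monomial $M=x_1^{e_1}\cdots x_m^{e_m}$ through its excess sequence $\Delta_i=e_1+\cdots+e_i-i$. I would prove the two implications separately, and I expect ``decomposition $\Rightarrow M=0$'' to be routine while ``$M=0\Rightarrow$ decomposition'' carries all the content.

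For the easy direction, suppose $M=A\cdot B$ with $A$ of degree $D$ involving only $x_i$ for $i<D$. Then the total exponent of $x_1,\ldots,x_{D-1}$ in $M$ is at least $\deg A=D$, since $A$ contributes exactly $D$ to these variables and $B$ contributes a nonnegative amount. Hence $e_1+\cdots+e_{D-1}\ge D>D-1$, so taking $d=D-1$ the vanishing criterion of the lemma applies and $M=0$. This direction does not even use the precise form of $B$.

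For the main direction, assume $M=0$. By the lemma, choose $d$ maximal with $\Delta_d>0$; the lemma's gap statement gives $e_{d+1}=0$. Since $\Delta_{d+1}=\Delta_d+e_{d+1}-1=\Delta_d-1$ must be $\le 0$ while $\Delta_d\ge 1$, I obtain $\Delta_d=1$. Setting $D=d+1$, the factor $A:=x_1^{e_1}\cdots x_{D-1}^{e_{D-1}}$ then has degree $\Delta_d+(D-1)=D$ and involves only $x_i$ with $i<D$, while the complementary factor is $B:=x_{D+1}^{e_{D+1}}\cdots x_m^{e_m}$ (there is no $x_D$ because $e_D=e_{d+1}=0$), of degree $m-D$.

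It remains to show $B$ is equivalent to $x_{D+1}\cdots x_m$, and this is the step I expect to be the real obstacle, because the relation $x_{D+1}^2=x_Dx_{D+1}$ threatens to reintroduce $x_D$ and spoil the clean splitting. The key observation is that maximality of $d$ forces $\Delta_i\le 0$ for all $i\ge D$, with $\Delta_D=0$; equivalently, the excess sequence of $B$ measured from its first variable $x_{D+1}$ is everywhere nonpositive and ends at $0$. This is exactly the hypothesis under which the reduction argument in the proof of the lemma collapses a monomial to its square-free form, and I would run that argument verbatim on $B$, repeatedly applying $x_{i+1}^2=x_ix_{i+1}$ at the negative entry of largest subscript. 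Since $\Delta_D=0$ is never negative, the process never applies the relation at index $D+1$, so $x_D$ is never created, the reduction stays within $x_{D+1},\ldots,x_m$, and it terminates at $x_{D+1}\cdots x_m$. This produces the decomposition $M=A\cdot B$ and completes the proof.
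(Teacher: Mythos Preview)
Your proof is correct and follows essentially the same approach as the paper. You are more explicit than the paper in several places: you prove the easy direction (which the paper omits), you derive $\Delta_d=1$ and hence $D=d+1$ (the paper simply sets $D=e_1+\cdots+e_d$, which equals $d+1$ for exactly the reason you give, though the paper never says so), and you spell out why the reduction of $B$ never invokes $x_{D+1}^2=x_Dx_{D+1}$. The paper phrases the reduction in terms of the excess numbers of $B$ viewed inside $Q_m$ (so they are all $\le -D$ and end at $-D$), whereas you shift to a sequence that is $\le 0$ and ends at $0$; these are the same argument up to a constant shift.
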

\begin{proof}
Here $D$ is the $e_1+\cdots+ e_d$ of the lemma for maximal $d$, and $A=x_1^{e_1}\cdots x_d^{e_d}$. Then $B=x_{D+1}^{e_{D+1}}\cdots x_m^{e_m}$ has degree $m-D$. Maximality of $d$ implies that the excess  numbers of $B$ are $\le -D$, and its last excess number equals $-D$. Consideration of the last excess number less than $-D$ shows that the excess sequence of $B$ can be reduced to $(-D,\ldots,-D)$, implying the claim about $B$.
\end{proof}

 We illustrate with the monomial $x_2^3x_3x_6^2$ in $Q_6$. The maximal $d$ of the lemma is 3, while $D$ of the corollary is $4$. We have $A=x_2^3x_3$, while $B=x_5^0x_6^2$ with initial excess sequence $(-5,-4)$.

\begin{proof}[Proof of Theorem \ref{key}] The polynomial $(x_1+\cdots+x_m)^m$ can be expanded as
\begin{equation}\label{expand}\prod_{i=0}^{p-1}(x_{2^i}^{2^i}+\cdots+x_m^{2^i}),\end{equation} using (\ref{xie}).
The total number of monomials when this is expanded is $\prod_{i=0}^{p-1}(m+1-2^i)$, which is even since $p>1$. 
We will enumerate the number of these monomials which equal 0 in $Q_m$.

Those having a gap at $x^D$ with $D=E_1+\cdots E_s$ for distinct 2-powers $E_i$ have their $A$ of Corollary \ref{cor} equal to any monomial in the expansion of
$$\prod_{i=1}^s(x_{E_i}^{E_i}+\cdots+x_{D-1}^{E_i}).$$
The number of such monomials is even, using the factor for $E_i=1$ if $D$ is odd, and any factor if $D$ is even. For any fixed $D$, each monomial $A$ will occur multiplied by the same set of monomials $B$. Thus the number of monomials in the original expansion having a gap at $x^D$ is even, and so the number of monomials with some gap is even, and this is the number which equal 0 in $Q_m$. Since the total number of monomials was even, we deduce that the number which are nonzero is even. Since every nonzero monomial equals $x_1\cdots x_m$ in $Q_m$, we deduce that the polynomial equals 0.

\end{proof}

\begin{defin} 
We introduce some notation in $H^*(B_n)$.
\begin{itemize}
\item $T_i=P_i+\cdots+P_1$ for $i\ge 1$ and $T_0=0$. Here $P_j=2^{p_j}$ as in (\ref{idef}).
\item $S=x_1+\cdots+x_{n-2}$;
\item $[i,j]=x_i\cdots x_j$ and $(i,j]=x_{i+1}\cdots x_j$;
\item If $W$ is a homogeneous polynomial of degree $d$, then $L_t$ (in $L_tW$) denotes any product of $n-d$ classes $x_i$ (not necessarily distinct) with $i\le t$.
\end{itemize}
\end{defin}

Also, recall that $n=P_r+\cdots+P_1+1$.
\begin{thm}\label{zero} The following elements of $H^n(B_n)$ equal $0$.
\begin{itemize}
   % \item[a.] $[1,T_{r-1}]\cdot [n-1,n]\cdot S^{P_r-1}$;
%\item[b.] $L_{P_j}\cdot(P_j,n]\cdot S^{P_j-1}$, $1\le j$;
\item[a.] $L_{T_j-P_i-1}\cdot(T_j-P_i,n]\cdot S^{P_j-1}$, $1\le i<j$;
\item[b.] $[1,T_{j-1}]\cdot [T_j,n]\cdot S^{P_j-1}$, $j\ge1$.
\end{itemize}    
\end{thm}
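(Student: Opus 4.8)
The plan is to reduce both statements to the gap criterion of Corollary \ref{cor} as applied inside $H^*(B_n)$, after using the relations \eqref{xie} to rewrite each expression in a normal form. Recall that $H^*(B_n)$ is the ring $Q_{n-1}$ adjoined the extra generator $x_n$ with $x_n^{2^p}=S^{2^p-1}x_n$, so in particular a degree-$n$ monomial $x_1^{e_1}\cdots x_n^{e_n}$ with $e_n\le 1$ vanishes exactly when the degree-$(n-e_n)$ part in $x_1,\dots,x_{n-1}$ has a gap in the sense of the Lemma preceding Corollary \ref{cor}. So the whole game is to exhibit, in each of (a) and (b), an index $d$ with $e_1+\cdots+e_d>d$, i.e.\ a ``too-heavy'' initial segment of the exponent vector.

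For part (b): after expanding $S^{P_j-1}$ by \eqref{xie} as a product $\prod_{i=0}^{p_j-1}(x_{2^i}^{2^i}+\cdots+x_{n-2}^{2^i})$, every resulting monomial lands among the variables $x_1,\dots,x_{n-2}$, and is multiplied by the fixed factor $[1,T_{j-1}]\cdot[T_j,n]$. The factor $[1,T_{j-1}]=x_1\cdots x_{T_{j-1}}$ contributes exactly the full ``staircase'' on $x_1,\dots,x_{T_{j-1}}$; the factor $S^{P_j-1}$ has total degree $P_j-1$; and $[T_j,n]=x_{T_j}\cdots x_n$ starts only at index $T_j=T_{j-1}+P_j$. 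Thus all $T_{j-1}$ variables $x_1,\dots,x_{T_{j-1}}$ together with the $P_j-1$ variables from $S^{P_j-1}$ (all of index $\le n-2<T_j$) pile onto the first $T_j-1$ positions, giving an initial exponent sum $\ge T_{j-1}+(P_j-1)=T_j-1$ distributed over positions $1,\dots,T_j-1$ — borderline. To push it over, note the smallest index appearing in $[T_j,n]$ is $T_j$, so position $T_j-1$ is the correct gap location; one counts that the exponent mass landing in positions $1,\dots,T_j-1$ is in fact $T_j-1$ but there is no variable at position $T_j$ from $S^{P_j-1}$ either if $T_j>n-2$, and for $j\ge 1$ one has $T_j\le n-1$; a short case check on whether $P_j$ is the top bit (so $T_j=n-1$) versus not shows that either position $T_j-1$ (gap from below) or a position inside the staircase already overflows. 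Concretely: if some monomial of $S^{P_j-1}$ puts any exponent on a variable $x_i$ with $i\le T_{j-1}$, then positions $1,\dots,T_{j-1}$ carry mass $>T_{j-1}$ and we have a gap; otherwise all $P_j-1$ units of $S^{P_j-1}$ sit in positions $T_{j-1}+1,\dots,n-2$, and combined with the empty positions $T_{j-1}+1,\dots,T_j-1$ left by $[1,T_{j-1}]$ and $[T_j,n]$, a counting argument (there are only $P_j-2$ positions strictly between $T_{j-1}$ and $T_j$ that are ``free'', versus $P_j-1$ units to place, all of index $<T_j$) forces mass $>T_j-1$ in the first $T_j-1$ positions. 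Hence the gap criterion applies and the element is $0$.

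For part (a): expand $S^{P_j-1}$ the same way; the fixed factor now is $L_{T_j-P_i-1}\cdot(T_j-P_i,n]$, where $L_{T_j-P_i-1}$ is (by definition) a product of classes $x_\el$ with $\el\le T_j-P_i-1$, and $(T_j-P_i,n]=x_{T_j-P_i+1}\cdots x_n$. The degree of $(T_j-P_i,n]$ is $n-(T_j-P_i)=P_r+\cdots+P_{j+1}+P_i+P_{i-1}+\cdots+P_1+1$, and $L_{T_j-P_i-1}$ has degree $n$ minus everything else, all concentrated in positions $\le T_j-P_i-1$. The total degree used up outside the window $[1,T_j-P_i-1]$ is: the degree of $(T_j-P_i,n]$, which is large, but crucially all of $S^{P_j-1}$ again sits in positions $\le n-2$, and $L_{T_j-P_i-1}$ by fiat sits in $\le T_j-P_i-1$. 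The key inequality to extract is that the combined degree of $L_{T_j-P_i-1}$ and of the part of $S^{P_j-1}$ landing in $[1,T_j-P_i-1]$ exceeds $T_j-P_i-1$ — and indeed $L_{T_j-P_i-1}$ alone has degree $n-\deg((T_j-P_i,n])-(P_j-1) = (T_j-P_i)-(P_j-1) = T_j-P_i-P_j+1$, which is $\le T_j-P_i-1$ since $P_j\ge 2$; so $L$ alone does not overflow, but adding any unit of $S^{P_j-1}$ of index $\le T_j-P_i-1$ does. If instead all $P_j-1$ units of $S^{P_j-1}$ avoid positions $\le T_j-P_i-1$, they lie in $[T_j-P_i, n-2]$, which has only $n-2-(T_j-P_i)+1 = n-T_j+P_i-1$ slots; since $i<j$, $P_i<P_j\le P_r+\cdots+P_{j+1}+1$ comfortably and one checks $n-T_j+P_i-1 < (P_j-1) + (\text{slots occupied by }(T_j-P_i,n])$, i.e.\ the window $(T_j-P_i,n]$ together with $S^{P_j-1}$ is forced to overflow some initial segment. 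Either way the gap criterion fires.

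\smallskip
\emph{Main obstacle.} The genuine content is the bookkeeping in the ``otherwise'' branch of each case — showing that when $S^{P_j-1}$ (or $L_t S^{P_j-1}$) tries to avoid the heavy initial block, there simply isn't enough room in the middle range of indices, so overflow is forced somewhere. Making this a clean counting statement, uniformly in $j$ (and in $i<j$ for part (a)), rather than a tangle of subcases according to which $P_k$'s are below $P_j$, is where the care is needed; I expect one can package it as: ``$S^{P_j-1}$ has degree $P_j-1$ and all its variables have index $<T_j$, while the prescribed factors occupy the full staircase below $T_{j-1}$ and nothing strictly between $T_{j-1}$ and $T_j$ except via $S$, leaving $P_j-1$ units to place in $P_j-2$ (or fewer) admissible free positions, a pigeonhole overflow.'' Everything else is a direct application of Corollary \ref{cor} together with the identities \eqref{xie} and $x_n^{2^p}=S^{2^p-1}x_n$.
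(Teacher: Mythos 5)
Your proposal rests on the premise that every monomial in the expansion of these expressions individually has a gap, so that Corollary \ref{cor} alone forces the vanishing. That premise is false, and the place where it breaks is exactly the off-by-one in your pigeonhole count. In part (b), the positions strictly between $T_{j-1}$ and $T_j$ are $T_{j-1}+1,\ldots,T_j-1$: there are $P_j-1$ of them, not $P_j-2$. Since $S^{P_j-1}$ contributes exactly $P_j-1$ units of degree, your ``otherwise'' branch admits exact fills --- for instance the monomial of $(x_{T_{j-1}+1}+\cdots+x_{T_j-1})^{P_j-1}$ equal to $x_{T_{j-1}+1}\cdots x_{T_j-1}$ --- and each exact fill produces $x_1\cdots x_n\ne0$. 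No gap fires there. The same happens in part (a): a factor $x_\ell^{2^k}$ with $\ell\ge T_j-P_i$ extends the interval $(T_j-P_i,n]$ downward rather than overflowing it, and a suitable choice of $L_{T_j-P_i-1}$ then completes the product to $x_1\cdots x_n$. So neither statement can be proved monomial-by-monomial; both are cancellation statements.

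The paper's proof supplies the two parity mechanisms your argument is missing. First, for each $k$, all terms $x_\ell^{2^k}$ with $T_j-P_i\le\ell\le n-2$ give the \emph{same} product with $x_{T_j-P_i+1}\cdots x_n$, and the number of such $\ell$ is even, so these terms cancel in pairs; only after this cancellation does the surviving expression have a genuine gap at $x_{T_j-P_i}$, which disposes of (a). Second, and essentially for (b), after reducing to $[1,T_{j-1}]\cdot[T_j,n]\cdot(x_{T_{j-1}+1}+\cdots+x_{T_j-1})^{P_j-1}$ one must invoke Theorem \ref{key}, i.e.\ $(x_1+\cdots+x_m)^m=0$ in $Q_m$ for $m=2^p-1$; that theorem is itself proved by showing that the number of \emph{nonzero} (exact-fill) monomials is even, not that there are none. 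Your proposal never uses Theorem \ref{key}, and without it (or an equivalent parity count of the exact fills) part (b) does not go through.
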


\begin{proof}

[a.] Let $D_j=T_j-P_i$. Write
\begin{equation} \label{SP-1}S^{P_j-1}=\prod_{k=0}^{p_j-1}(x_1^{2^k}+\cdots+x_{n-2}^{2^k}).\end{equation}
For any $k$, and $\ell$ satisfying $D_j\le\ell\le n-2$, 
$$(x_{D_j+1}\cdots x_n)\cdot x_\ell^{2^k}=x_{D_j+1-2^k}\cdots x_n,$$
independent of $\ell$. Since the number of values of $\ell$ is even,
$$(x_{D_j+1}\cdots x_n)\cdot(x_{D_j}^{2^k}+\cdots+x_{n-2}^{2^k})=0.$$
We are left with
$$L_{D_j-1}\cdot (x_{D_j+1}\cdots x_n)\cdot\prod _{k=0}^{p_j-1}(x_1^{2^k}+\cdots+x_{D_j-1}^{2^k}).$$
 This is 0 because it has a gap at $x_{D_j}$. The part complementary to the middle factor has degree $D_j$, which is greater than its largest subscript.

[b.] The argument begins by reducing $S$  to $(x_1+\cdots+x_{T_j-1})$, similarly to part (a). [\![Write $S^{P_j-1}$ as in (\ref{SP-1}). The even number of terms $x_\ell^{2^k}$ in each factor with $T_j\le\ell\le n-2$ can be ignored in the presence of $x_{T_j}\cdots x_n$.]\!]
Now we have
$$(x_1\cdots x_{T_{j-1}})\cdot(x_{T_j}\cdots x_n)\cdot(x_1+\cdots+x_{T_j-1})^{P_j-1}.$$
Monomials in $(x_1+\cdots+x_{T_j-1})^{P_j-1}$ containing any factor $x_i$ with $i\le T_{j-1}$ yield 0 upon multiplication by $x_1\cdots x_{T_{j-1}}$. So now we have
$$(x_1\cdots x_{T_{j-1}})\cdot(x_{T_j}\cdots x_n)\cdot(x_{T_{j-1}+1}+\cdots+x_{T_j-1})^{P_j-1}.$$
Because $x_{T_{j-1}+1}^2=0$ in the presence of $x_1\cdots x_{T_{j-1}}$, we see that $x_{T_{j-1}+1}$ is acting here like $x_1$ does in $Q_m$. The uniformity of the other relations implies that we can effectively reduce the subscripts in $(x_{T_{j-1}+1}+\cdots +x_{P_j+T_{j-1}-1})^{P_j-1}$ by $T_{j-1}$, so that it behaves like $(x_1+\cdots+x_{P_j-1})^{P_j-1}$, which equals 0 by Theorem \ref{key}.

\end{proof}

Now we prove Theorem \ref{main}. There are $2^r$ bijections
$$\{1,\ldots,r+1\}\mapright{\s}\{0,\ldots,r\}$$
satisfying $\s(i)\le i$ for all $i$. [\![$\s(1)\in\{0,1\}$, $\s(i)\in \{0,\ldots,i\}-\{\s(1),\ldots,\s(i-1)\}$, and then $\s(r+1)$ is uniquely determined.]\!]
Then 
\begin{equation}\label{permsum}\chi\sq^{n-\a(n)}(x_{T_1}\cdots x_{T_r}x_n)=\sum_{\s}\biggl(\bigl(\prod_{i=1}^rx_{T_i}^{P_{\s(i)}}\bigr) x_n^{P_{\s(r+1)}}\biggr).\end{equation}

We divide the $2^r$ terms of (\ref{permsum}) into four types, determined by $\s^{-1}(r-1)$ and $\s^{-1}(r)$. The first type is when $\s(r)=r-1$ and $\s(r+1)=r$. Then this summand of (\ref{permsum}) equals $\cdots x_{T_{r-1}}^{P_i}x_{T_r}^{P_{r-1}}x_n^{P_r}$, with $i<r-1$. In the notation of Theorem \ref{zero}, this equals $L_{T_{r-1}}\cdot (T_r-P_{r-1},n]\cdot S^{P_r-1}$, which equals 0 by Theorem \ref{zero}[a], since $T_{r-1}\le T_r-P_{r-1}-1$.

The second type is very similar: $\s(r)=r$ and $\s(r+1)=r-1$. If $\s(r-1)=i$ with $i<r-1$, then the summand of  (\ref{permsum}) can be written as $L_{T_{r-2}}\cdot (T_{r-1}-P_i,n]\cdot S^{P_{r-1}-1}$, which equals 0 by Theorem \ref{zero}[a].
Here we have used that $x_{T_{r-1}}^{P_i}x_{T_r}^{P_r}=(T_{r-1}-P_i,n-1]$.

The third type has $\s(r-1)=r-1$ and $\s(r+1)=r$. If $\s(r)=i\ne0$, then we have $L_{T_{r-1}}\cdot (T_r-P_i,n]\cdot S^{P_r-1}$, so again we get 0 by part [a]. If $\s(r)=0$, then $\s(i)=i$ for $1\le i\le r-1$, and our expression is of the form $[1,T_{r-1}]\cdot[n-1,n]\cdot S^{P_r-1}$ since $T_r=n-1$. This equals 0 by part [b] of Theorem \ref{zero}.

The fourth type has $\s(r-1)=r-1$ and $\s(r)=r$. Let $\s(i)=i$ for $k\le i\le r$ and, if $k>1$, $\s(k-1)<k-1$. Then $\s(r+1)=k-1$. If $\s(k-1)=j>0$, then our expression equals $L_{T_{k-2}}\cdot(T_{k-1}-P_j,n]\cdot S^{P_{k-1}-1}$, which equals 0  by \ref{zero}[a]. If $\s(k-1)=0$, then the expression equals $[1,T_{k-2}]\cdot [T_{k-1},n]\cdot S^{P_{k-1}-1}$, which is 0 by part [b]. If $k=1$, then $\s(r+1)=0$ and the expression equals $x_1\cdots x_n$, which is the only nonzero summand of (\ref{permsum}).

\section{Other manifolds} \label{mfsec}
In this section, we discuss our attempts to find orientable $n$-manifolds with $n\equiv0$ mod 4 and not a 2-power which have $\wbar_{n-\alh(n)}\ne0$.

First we discuss the real Bott manifold $B_{12}$ with relations as in Theorem \ref{main}. One way of showing $\wbar_9(B_{12})=0$ is by the method of Section \ref{pfsec}. We verified that $\chi\sq^9(x_ix_{11}x_{12})=0$ for $1\le i\le10$, which are the only classes needing consideration.

Another way is computer dependent. By \cite{Ds},
$$w_i(B_{12})=\s_i(x_1,\ldots,x_{10},x_1+\cdots+x_{10}),
$$
where $\s_i$ denotes the elementary symmetric polynomial. We use {\tt Maple} to compute this for $i=2$, 3, 5, and 9, and then to reduce each to a sum of $i$-fold products of distinct $x_j$'s using the relations $x_j^2=x_{j-1}x_j$. Then we use that $\wbar_9=w_9+w_2^2w_5+w_3^3$ for orientable manifolds, and use the relations again to obtain $\wbar_9=0$.

We tried several other 12-by-12 matrices $A$, such as the one whose only nonzero entries are $a_{i,i+1}=a_{i,i+2}=1$ for $1\le i\le 10$, and always obtained $\wbar_9=0$.

We also looked at generalized Dold manifolds, but the values of $n$ for which we found examples of the desired type formed a small subset of those in Theorem \ref{main} and Corollary \ref{cormain}.

\begin{defin} The generalized Dold manifold $P(n;m_1,\ldots,m_r)$ is defined to be $$S^{n}\times_T(CP^{m_1}\times\cdots\times CP^{m_r}),$$ where the involution $T$ is antipodal on $S^n$ and conjugation on the complex projective spaces.\end{defin}
Dold manifolds have been studied by many people, but our source for generalized Dold manifolds is \cite{SZ}, where the following result is proved.
\begin{thm} \label{SZthm} $($\cite[Corollary 5.4]{SZ}$)$ Let $c$ and $d_i$ satisfy $|c|=1$ and $|d_i|=2$. Then
$$H^*(P(n;m_1,\ldots,m_r);\zt)=\zt[c,d_1,\ldots,d_r]/(c^{n+1},d_1^{m_1+1},\ldots,d_r^{m_r+1}).$$
The total Stiefel-Whitney class is $$w(P(n;m_1,\ldots,m_r))
=(1+c)^{n+1-r}\prod(1+c+d_i)^{m_i+1}.$$\end{thm}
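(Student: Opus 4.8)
The plan is to realize $P=P(n;m_1,\dots,m_r)$ as the total space of a fibre bundle $CP^{m_1}\times\cdots\times CP^{m_r}\to P\xrightarrow{\pi}RP^n=S^n/T$, to compute $H^*(P;\zt)$ by Leray--Hirsch, and to compute $w(P)$ by splitting $TP$ into its vertical and horizontal parts. The case $r=1$ of both assertions is classical (Dold): $S^n\times_\zt CP^{m}$ has cohomology $\zt[c,d]/(c^{n+1},d^{m+1})$ and total Stiefel--Whitney class $(1+c)^n(1+c+d)^{m+1}$, so the real content is the bootstrap to general $r$.

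For the cohomology ring I would first note that the monodromy action of $\pi_1(RP^n)$ on $H^*(\prod_i CP^{m_i};\zt)=\bigotimes_i\zt[d_i]/(d_i^{m_i+1})$ is trivial: it factors through $\zt$ acting by the product conjugation, and the only graded-ring automorphism of $\zt[d]/(d^{m+1})$ is the identity, since $d$ is the unique nonzero class in degree $2$. Next I would name generators: take $c\in H^1(P)$ to be $\pi^*$ of the generator of $H^1(RP^n)$, so $c^{n+1}=0$ automatically; and, collapsing the factors other than the $i$-th, obtain a map $\rho_i\colon P\to S^n\times_\zt CP^{m_i}$ to the classical Dold manifold and let $d_i=\rho_i^*d\in H^2(P)$. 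Since $d_i$ restricts on each fibre to the generator of $H^2(CP^{m_i};\zt)$, the monomials $\prod_i d_i^{a_i}$ with $0\le a_i\le m_i$ restrict to a $\zt$-basis of the fibre cohomology, so Leray--Hirsch gives that $H^*(P;\zt)$ is free over $H^*(RP^n;\zt)=\zt[c]/(c^{n+1})$ on that basis; additively this is $\zt[c,d_1,\dots,d_r]/(c^{n+1},d_1^{m_1+1},\dots,d_r^{m_r+1})$. The relations $c^{n+1}=0$ and $d_i^{m_i+1}=0$ hold exactly because $c$ is a pullback from $RP^n$ and $d_i$ a pullback from $S^n\times_\zt CP^{m_i}$, where $d^{m_i+1}=0$ by Dold; this identifies the ring.

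For the total Stiefel--Whitney class, write $TP=V\oplus\pi^*T(RP^n)$ with vertical bundle $V=S^n\times_\zt T(\prod_i CP^{m_i})=\bigoplus_i V_i$, the $\zt$-action being the derivative of the product conjugation. The horizontal part contributes $w(\pi^*T(RP^n))=(1+c)^{n+1}$. For each $V_i$ I would use the equivariant Euler sequence, a $\zt$-equivariant isomorphism $T(CP^{m_i})\oplus\underline\C\cong(m_i+1)\overline{\gamma}_i$ for the conjugation actions; applying $S^n\times_\zt(-)$ gives $V_i\oplus(\underline\R\oplus\gamma)\cong(m_i+1)E_i$ over $P$, where $E_i=S^n\times_\zt(\overline{\gamma}_i)_{\R}$ and the trivial $\underline\C$ becomes $\underline\R\oplus\gamma$ because $\C$ with complex conjugation is the trivial plus the sign representation of $\zt$ and $S^n\times_\zt(\text{sign})=\gamma$. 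One checks $w(E_i)=1+c+d_i$ (its $w_1$ is $c$ since complex conjugation reverses orientation, and $w_2$ agrees, up to the Leray--Hirsch normalization, with $d_i$) and $w(\underline\R\oplus\gamma)=1+c$, hence $w(V_i)(1+c)=(1+c+d_i)^{m_i+1}$. Multiplying the horizontal and all vertical contributions and using that $1+c$ is a unit,
\[
w(P)=(1+c)^{n+1}\prod_{i=1}^r\frac{(1+c+d_i)^{m_i+1}}{1+c}=(1+c)^{n+1-r}\prod_{i=1}^r(1+c+d_i)^{m_i+1},
\]
as claimed. (Consistency check: the degree-$(n+2\sum m_i)$ term equals $\prod(m_i+1)$ times $\chi(RP^n)$ modulo $2$, matching $\chi(P)\bmod 2$.)

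I expect the main obstacle to be the equivariance bookkeeping in the last paragraph: the Euler sequence is only holomorphically natural while conjugation is anti-holomorphic, so one must argue carefully that it still yields the stated isomorphism of underlying real $\zt$-bundles, and one must track precisely how conjugation twists the trivial $\underline\C$ summand into $\underline\R\oplus\gamma$ over $RP^n$. Dovetailing this with the choice of the degree-$2$ generators $d_i$ coming from Leray--Hirsch (so that simultaneously $d_i^{m_i+1}=0$ and $w(E_i)=1+c+d_i$, with no spurious powers of $c$) is the delicate point; since conjugation is not $\C$-linear one cannot simply take complex projectivizations to see it, and it genuinely rests on the conjugation-space structure of $CP^m$, equivalently on Dold's original computation. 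These are exactly the inputs that the general results of \cite{SZ} package, of which Theorem~\ref{SZthm} is the formal consequence for $M=\prod_i CP^{m_i}$.
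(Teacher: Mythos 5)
The paper offers no proof of this statement: it is quoted verbatim from \cite[Corollary 5.4]{SZ}, so there is no internal argument to compare yours against. That said, your outline is essentially the standard route to the result. Leray--Hirsch over $RP^n$, with the classes $d_i$ pulled back from the classical Dold manifolds $P(n,m_i)=S^n\times_{\zt}CP^{m_i}$ via your maps $\rho_i$, gives the additive structure; surjectivity of $\zt[c,d_1,\dots,d_r]/(c^{n+1},d_1^{m_1+1},\dots,d_r^{m_r+1})\to H^*(P)$ together with a dimension count in each degree then identifies the ring. The splitting $TP\cong\pi^*T(RP^n)\oplus\bigoplus_iV_i$ and the relation $V_i\oplus(\underline{\R}\oplus\lambda)\cong(m_i+1)E_i$ coming from the conjugation-equivariant Euler sequence yield the product formula, and your final arithmetic is right; the formula correctly specializes to Dold's $(1+c)^{n}(1+c+d)^{m+1}$ when $r=1$.

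The one place your write-up is not self-contained is exactly the place you flag: that the isomorphism $T(CP^m)\oplus\underline{\C}\cong(m+1)\overline{\gamma}$ descends equivariantly through the anti-holomorphic conjugation, that the trivial $\underline{\C}$ carries the sign representation on its imaginary part (whence $w(\underline{\R}\oplus\lambda)=1+c$), and that $w(E_i)=1+c+d_i$ with the \emph{same} $d_i$ produced by Leray--Hirsch, with no spurious $c^2$ term. These assertions are all true --- the Euler isomorphism $\Hom(\gamma,\gamma^{\perp})\oplus\Hom(\gamma,\gamma)\cong\Hom(\gamma,\C^{m+1})$ is natural and so commutes with the conjugation lifts, and since $E_i=\rho_i^*(S^n\times_{\zt}\overline{\gamma})$ the normalization of $w(E_i)$ reduces to the $r=1$ Dold case --- but as written they are asserted and deferred to ``Dold's original computation,'' which is an appeal to the literature of the same kind as the paper's citation of \cite{SZ}. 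In short: correct strategy and conclusion, with the crux outsourced; this is no less complete than what the paper itself provides, which is the bare citation.
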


Our first example, an ordinary Dold manifold, was already noted in \cite{MP}.
We omit our proof.
\begin{thm}\label{2^e+1} For $n=2^e+1$, $P(2^e-3;2)$ is an orientable $n$-manifold with $\wbar_{n-\alh(n)}\ne0$.\end{thm}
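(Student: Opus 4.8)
The plan is to mimic the Steenrod-square argument of Section \ref{pfsec}, but now in the (much simpler) cohomology ring given by Theorem \ref{SZthm}. Write $n=2^e+1$, so $m:=2^e-3=n-4$, and set $M=P(m;2)$; this has $\dim M = m+4 = n$. Its mod $2$ cohomology is $\zt[c,d]/(c^{m+1},d^3)$ with $|c|=1$, $|d|=2$, and top class $c^m d^2$ in degree $n$. Since $m+1=2^e-2\equiv 2$ mod $4$, each row of the relevant matrix picture is irrelevant here; instead orientability follows directly from the total Stiefel-Whitney class formula in Theorem \ref{SZthm}: $w(M)=(1+c)^{m-1}(1+c+d)^{3}$, and one checks the degree-$1$ part $w_1$ vanishes because $(m-1)+3 = m+2 = 2^e-1$ is odd$\ldots$ more precisely $w_1 = (m+2)c = (2^e-1)c$, which is $c\ne0$ — so one must instead observe the correct count. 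The clean statement is: $w_1(M) = \bigl((m-1)+3\bigr)c = (m+2)c$; for this to vanish mod $2$ we need $m$ even, i.e. $2^e-3$ even, which fails. Hence orientability must come the other way: compute $w(M)^{-1}$ directly, or note $P(m;2)$ is orientable iff $m$ is odd by \cite{SZ}, which is our case. I would simply cite \cite{SZ} (or \cite{MP}) for orientability, as the author does for the analogous Bott-manifold statement.

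For the dual class, first identify $n-\alh(n)$. Since $n=2^e+1\equiv1$ mod $4$ (for $e\ge2$), we have $\alh(n)=\a(n)=2$, so the target is $\wbar_{n-2}=\wbar_{2^e-1}$. Following the $\a(n)=2$ warmup in Section \ref{pfsec}, it suffices to exhibit $z\in H^2(M)$ with $\chi\sq^{2^e-1}(z)\ne0$, equivalently $\b(z)\ne0$ for $\b$ the unique Milnor basis element of grading $2^e-1$ that is nonzero on degree-$\le 1$ classes — but here $|z|=2$, so I should instead argue directly: $\chi\sq^{2^e-1}$ restricted to $H^2(M)$ is computed via the Cartan formula and $\cP^0_t(x)=x^{2^t}$ for $|x|=1$, together with $\sq$ on $d$ (a degree-$2$ class). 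The natural choice is $z=d$. Then $\chi\sq^{2^e-1}(d)$ is a sum of terms $\sq^a(d)\sq^b(\text{stuff})$; since $d^3=0$, the surviving terms are constrained, and the only way to reach degree $n=2^e+1$ is to pair a high power of $c$ against $d$ or $d^2$. I would compute $\sq(d)$ in $M$ — from the total SW class / the known action on generalized Dold manifolds in \cite{SZ}, $\sq(d) = d + cd + d^2$ or similar — and then push through the formula $\chi\sq^{2^e-1}(d^{\,?})$ to land on a nonzero multiple of $c^m d^2$.

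More efficiently, I expect the slick route is via $S$-duality exactly as in Section \ref{pfsec}: $\wbar_{2^e-1}(M)\ne0$ iff there is $z$ with $\langle \chi\sq^{2^e-1}(z),[M]\rangle\ne0$. Taking $z=d$, one reduces $\chi\sq^{2^e-1}(d)$ using $\chi\sq = \sum$ Milnor basis elements of that grading, the Cartan formula, $\cP^0_t$ acting on $c$, the derivation-type action on $d$, and the truncations $c^{m+1}=0$, $d^3=0$. The combinatorics collapses because $2^e-1$ in binary is all $1$'s, forcing essentially a single monomial's worth of contribution to survive modulo the relations — analogous to Lemma \ref{Blem} — and that contribution is $c^{2^e-3}d^2 = c^m d^2 \ne 0$.

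The main obstacle I anticipate is the bookkeeping in the last step: correctly tracking how $\sq$ (or the Milnor basis elements) act on the degree-$2$ class $d$ in $P(m;2)$ and verifying that, after imposing $c^{m+1}=0$ and $d^3=0$, exactly an odd number of monomials equal to the fundamental class $c^m d^2$ survive. This is the generalized-Dold analogue of Theorem \ref{key}/\ref{zero}, and while it should be short here (only one $d$-variable, truncated at $d^3$), getting the parity count exactly right — rather than off by the usual factor of two — is where care is needed. I would organize it by splitting $\chi\sq^{2^e-1}(d)$ according to how much of the degree is absorbed by $d$ versus by powers of $c$, showing all but the extreme term cancel in pairs by the same "even number of equal monomials" device used throughout Section \ref{pfsec}. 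Finally, $\wbar_{2^e-1}=w_{2^e-1}$ in the relevant range (the correction terms $w_2^2 w_{2^e-5}+\cdots$ land in already-handled degrees, or one invokes Theorem \ref{DWthm}'s bound to conclude $\wbar$ and $w$ agree on the top nonzero dual class), completing the proof.
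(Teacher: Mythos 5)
There is a genuine gap: your proposal is a plan rather than a proof, and both halves of it have problems. On orientability, you misapply Theorem \ref{SZthm}. For $P(2^e-3;2)$ the sphere dimension is $2^e-3$ and $r=1$, so the exponent $n+1-r$ gives $w=(1+c)^{2^e-3}(1+c+d)^3$, not $(1+c)^{2^e-4}(1+c+d)^3$; hence $w_1=\bigl((2^e-3)+3\bigr)c=2^ec=0$ and orientability is immediate. Your version produces $w_1=c\ne0$, and you then punt to an unverified citation without resolving the contradiction. On the main point, $\wbar_{2^e-1}\ne0$, nothing is actually computed. The $\chi\sq$ machinery of Section \ref{pfsec} does not transfer as you suggest: the class $z=d$ is a single degree-$2$ generator, not a product of degree-$1$ classes, so the reduction via $\cP^0_t(x)=x^{2^t}$ and the Cartan formula is unavailable, and Lemma \ref{Blem} (whose constraint $\sum t_i\le r$ is tied to $z$ being a product of $r+1$ one-dimensional classes) does not apply. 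You would need to determine $\sq^1d$ and $\sq^2d$ in $P(2^e-3;2)$ and evaluate the surviving Milnor operation on $d$ from scratch; you explicitly defer exactly this (``the main obstacle I anticipate'') and guess at $\sq(d)$. A proof that names its own missing step is not a proof.

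For comparison, the paper omits the proof of this statement but proves the companion Theorem \ref{two} by a direct expansion of the total dual Stiefel--Whitney class, and the same two-line argument works here: writing $\wbar=(1+c)^{2^L-2^e+3}(1+c+d)^{2^L-3}$ for $L$ large, the $d^1$-part of the second factor is $(2^L-3)\,d\,(1+c)^{2^L-4}$, so the coefficient of $c^{2^e-3}d$ in $\wbar_{2^e-1}$ is $\binom{2^{L+1}-2^e-1}{2^e-3}$, which is odd by Lucas since the binary expansion of $2^e-3$ uses only bits below position $e$, all of which are set in $2^{L+1}-2^e-1$. No Steenrod operations are needed. If you want to salvage your route, you must first fix the exponent in $w$, and then either carry out the Milnor-basis computation on $d$ in full or switch to the binomial-coefficient argument above.
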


\begin{thm}\label{two} If $n=3+2^e+2^{f+1}$ or $3+2^e+2^{f+1}+2^{g+1}$ with $2\le e\le f<g$, then 
$P(2^e-1;2,2^f)$ and $P(2^e-1;2,2^f,2^g)$, respectively, are orientable $n$-manifolds with $\wbar_{n-\alh(n)}\ne0$.
\end{thm}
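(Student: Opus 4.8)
\textbf{Proof plan for Theorem \ref{two}.}

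The plan is to mimic the structure of the proof of Theorem \ref{main}, working with the cohomology ring of $P(2^e-1;2,2^f)$ (resp.\ $P(2^e-1;2,2^f,2^g)$) described in Theorem \ref{SZthm} rather than with the Bott manifold ring, but exploiting the same mechanism: exhibit a class $z$ of degree $\alh(n)$ on which $\chi\sq^{n-\alh(n)}$ is nonzero, and appeal to the standard $S$-duality argument already spelled out in Section \ref{pfsec}. First I would record that $n$ is chosen so that $n\equiv3\pmod4$ (since $e\ge2$ forces $2^e\equiv0\pmod4$, and similarly for the other summands, leaving the $3$), hence $\alh(n)=\a(n)+1$; and one computes $\a(n)$ directly from the given binary expansion ($\a(n)=2+\#\{\text{other }2\text{-power summands}\}$, e.g.\ $\a(3+2^e+2^{f+1})=1+2+1=4$ when $e<f$, with the usual carry adjustments when exponents collide). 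Then $n-\alh(n)$ is an explicit number, and by \cite{Mil} one identifies $\chi\sq^{n-\alh(n)}$ with the sum of all Milnor basis elements of that grading; a lemma of the same flavor as Lemma \ref{Blem} will pin down which of these act nontrivially on a class of degree $\alh(n)$, so that effectively only one Milnor operation $\b$ matters. Since the generators of $H^*$ here are $c$ (degree $1$) and the $d_i$ (degree $2$), I would use the Cartan formula together with $\cP_t^0(c)=c^{2^t}$ and the action of Milnor operations on the degree-$2$ classes $d_i$ (which by the Wu formula or by pulling back from $CP^\infty$ is again a sum of admissible power operations, with $\sq^2 d_i = d_i^2$, $\sq^1 d_i=0$, etc.) to write out $\chi\sq^{n-\alh(n)}(z)$ as an explicit polynomial in $c,d_1,\dots$, and then reduce modulo the truncation relations $c^{n+1-?}$, $d_i^{m_i+1}$. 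The natural candidate for $z$ is $c^{\,a}d_1 d_2$ (resp.\ $c^{\,a}d_1 d_2 d_3$) for the appropriate $a$ making the total degree $\alh(n)$, since $d_1$ corresponds to the $CP^2$ factor of dimension giving the "extra" $\a$-contribution of $2$ and $c$ to the $S^{2^e-1}$ factor.

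The key computational core is to verify that after all reductions the coefficient of the top class $c^{2^e-1}d_1^2 d_2^{2^f}$ (resp.\ with a third factor) is $1$. Here I expect the arithmetic to parallel the "permutation sum" (\ref{permsum}) in the Bott case: $\chi\sq^{n-\alh(n)}(z)$ becomes a sum over ways of distributing the $2$-power "blocks" $P_0,P_1,\dots$ among the factors $c, d_1, d_2$, and one must show that all terms cancel in pairs except the one landing in the top class. The cancellation will come, as in Theorem \ref{key} and Theorem \ref{zero}, from evenness: most monomials will occur an even number of times or will be annihilated by the truncation relations (a power $c^{N}$ or $d_i^{N}$ exceeding the cutoff), and the bookkeeping of which distributions survive is exactly the analogue of the four-type case analysis at the end of Section \ref{pfsec}. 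For the upper bound half (that $\wbar_{n-\alh(n)}$ is genuinely top, i.e.\ nothing larger survives), nothing new is needed: Theorem \ref{DWthm} already gives $\wbar_i=0$ for $i>n-\alh(n)$, so only the existence/nonvanishing statement requires proof.

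The main obstacle I anticipate is the precise combinatorics of the Milnor-operation action on the degree-$2$ generators $d_i$ together with the interaction of the three truncation relations. Unlike the Bott case, where every generator has degree $1$ and $\cP_t^0$ acts by squaring, here the $d_i$ live in a truncated polynomial algebra $\zt[d_i]/(d_i^{m_i+1})$ with $m_i\in\{2,2^f,2^g\}$, so a given Milnor basis element can hit $d_i$ in several ways (via $\sq^{2^t}$ and compositions), and one must check that the "wrong" contributions either exceed the truncation degree of $d_i$ or cancel. I would organize this by treating the $CP^2$ factor separately — its truncation at $d_1^3$ is tight enough that only $1, d_1, d_1^2$ appear, drastically limiting the possibilities — and then handling the $CP^{2^f}$ (and $CP^{2^g}$) factors, whose large truncation degrees mean the relevant relation is essentially only used once, to kill the would-be top-plus-one term. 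Once the action is tabulated, the evenness/cancellation argument should go through essentially verbatim from Section \ref{pfsec}, and I would present only the points of genuine divergence, omitting (as the author does for Theorems \ref{2^e+1} and the warm-up) the routine verifications.
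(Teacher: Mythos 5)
Your overall strategy (find $z$ of degree $\alh(n)$ with $\chi\sq^{n-\alh(n)}(z)$ equal to the top class, then apply $S$-duality) is sound in principle, but it is not the route the paper takes, and as written it has concrete gaps that would derail the execution. First, your stated input $\sq^1 d_i=0$ is false for (generalized) Dold manifolds: the classical computation gives $\sq^1 d_i = c\,d_i$, so the total square of $d_i$ is $d_i+cd_i+d_i^2$ and the Cartan/Milnor bookkeeping is substantially messier than the Bott case, where every generator is $1$-dimensional and $\cP_t^0$ just acts by $2^t$-th powers. Second, the hoped-for analogue of Lemma \ref{Blem} fails. For $n=3+2^e+2^{f+1}$ one has $\alh(n)=5$ and $n-\alh(n)=2^e+2^{f+1}-2$, and besides the "expected" element $\sq(t)$ with $t_e=t_{f+1}=1$ there are other Milnor basis elements of that grading with $\sum t_i\le 5$, e.g.\ $t_1=1$, $t_e=1$, $t_f=2$ (excess $4$), which need not vanish on a degree-$5$ class. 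So the reduction to "effectively only one Milnor operation $\b$" does not hold, and the central computation — which your proposal defers with "I expect the arithmetic to parallel (\ref{permsum})" and "should go through essentially verbatim" — is exactly where the work lies and is never carried out.

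For comparison, the paper's actual proof avoids Steenrod operations entirely. Using the product formula of Theorem \ref{SZthm}, it writes $\wbar$ as
$(1+c)^{2^L-2^{e+1}+2^e+2}(1+c+d_1)^{2^L-4}(1+c+d_1)(1+c+d_2)^{2^L-2^{f+1}}(1+c+d_2)^{2^f-1}$
for large $L$, observes that in the coefficient of $c^{2^e-2}d_1d_2^{2^f-1}$ (a class of degree $n-\alh(n)$) the $d_2^{2^f-1}$ must come from the last factor and the $d_1$ from the single factor $(1+c+d_1)$, and reduces everything to the single binomial coefficient $\binom{p}{2^e-2}$ with $p=3\cdot2^L-2^{f+1}-2^{e+1}+2^e-2$, which is odd. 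That is a two-line mod-$2$ binomial computation. If you want to salvage your approach you would need to (i) correct the Steenrod action on the $d_i$, (ii) enumerate \emph{all} Milnor basis elements of grading $n-\alh(n)$ and excess $\le\alh(n)$ and control their contributions, and (iii) actually exhibit the cancellation; as it stands the proposal is a plan whose two key technical assumptions are wrong or unverified.
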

\begin{proof} We sketch the proof in the first case. The second case is proved similarly.

We have 
$$w=(1+c)^{2^e-2}(1+c+d_1)^3(1+c+d_2)^{2^f+1}.$$
The manifold is orientable. We will show that the coefficient of $c^{2^e-2}d_1^1d_2^{2^f-1}$, a class of grading $2^e-2+2^{f+1}=n-\alh(n)$ in $\wbar$, is odd.

The dual class $\wbar$ can be written as
$$(1+c)^{2^L-2^{e+1}+2^e+2}(1+c+d_1)^{2^L-4}(1+c+d_1)(1+c+d_2)^{2^L-2^{f+1}}(1+c+d_2)^{2^f-1},$$
for $L$ sufficiently large.
The $d_2^{2^f-1}$ must come from the last factor, and $d_1^1$ from the third. The $d_1$ and $d_2$ can now be omitted from the other factors, whose product becomes $(1+c)^p$ with $p=3\cdot2^L-2^{f+1}-2^{e+1}+2^e-2$, and $\binom p{2^e-2}$ is odd.

\end{proof}

This procedure will not work for $n=3+2^e+2^{f+1}+2^{g+1}+2^{h+1}$ because the factor $(1+c)^{2^e-4}$ in $w$ leads to a $(1+c)^{2^e+4}$ in $\wbar$, which when combined with $(1+c+d_1)^{2^L-4}$  yields $(1+c)^{2^L+2^e}$, which equals 1 on $RP^{2^e-1}$. A {\tt Maple} check has led us to feel that there are no other values of $n$ for which there is an $n$-dimensional orientable generalized Dold manifold with $\wbar_{n-\alh(n)}\ne0$.

In \cite{B1} and \cite{B2}, dual Stiefel-Whitney classes of some non-orientable manifolds are obtained.

\def\line{\rule{.6in}{.6pt}}


\begin{thebibliography}{99}
\bibitem{B1} D.B.Baralic, {\em Immersions and embeddings of quasitoric manifolds over the cube}, Publ Inst Math (Beograd) (N.S.) {\bf 95}:109 (2014) 63--71.
\bibitem{B2} D.B.Baralic and V.N.Grujic, {\em Quasitoric manifolds--small covers over properly coloured polytopes: immmersions and embeddings}, Math Sbornik {\bf 207}:4 (2016) 3--14.
\bibitem{BP} E.H.Brown and F.P.Peterson, {\em Relations among characteristic classes, I.}, Topology {\bf 3} (1964) 39--52.
\bibitem{Choi} S.Choi, M.Masuda, and S.Oum, {\em Classification of real Bott manifolds and acyclic digraphs}, Trans Amer Math Soc {\bf 369} (2017) 2987--3011.
\bibitem{C} R.L.Cohen, {\em The immersion conjecture for differentiable manifolds}, Annals of Math {\bf 122} (1985) 237--328.
\bibitem{DW} D.M.Davis and W.S.Wilson, {\em Stiefel-Whitney classes and nonimmersions of orientable and Spin manifolds}, Topology and Appl {\bf 307} (2022) paper 107780, 14 pp. 
\bibitem{Ds} R.Dsouza, {\em On the topology of real Bott manifolds}, Indian Jour of Pure Appl Math {\bf 49} (2018) 743--763.
\bibitem{KM} Y.Kamishima and M.Masuda, {\em Cohomological rigidity of real Bott manifolds}, Alg Geom Topology {\bf 9} (2009) 2479--2502.
\bibitem{Mas} W.S.Massey, {\em On the Stiefel-Whitney classes of a manifold}, Amer Jour Math {\bf 82} (1960) 92--102.
\bibitem{MP} W.S.Massey and F.P.Peterson, {\em On the dual Stiefel-Whitney classes of a manifold}, Bol Soc Mat Mex {\bf 3} (1963) 1--13.
\bibitem{Mil} J.Milnor, {\em The Steenrod algebra and its dual}, Annals of Math {\bf 67} (1958) 150--171.
\bibitem{Papa} S.Papastavridis, {\em On the vanishing of the dual Stiefel-Whitney classes of orientable manifolds}, Proc Amer Math Soc {\bf 76} (1979) 310--314.
\bibitem{SZ} S.Sarkar and P.Zvengrowski, {\em On generalized projective product spaces and Dold manifolds}, Homology, Homotopy, and Appl {\bf 24} (2022) 265--289.
\end{thebibliography}
\end{document}